\newcommand{\step}[1]{\par\medskip\noindent\bf#1\rm}
\theoremstyle{plain}
\newtheorem {theorem}{Theorem}[section]
\newtheorem {lemma}[theorem]{Lemma}
\newtheorem {corollary} [theorem]{Corollary}
\newtheorem {proposition} [theorem]{Proposition}
\theoremstyle{definition}
\newtheorem{remark}[theorem]{Remark}
\theoremstyle{remark}
\numberwithin{equation}{section}
\newcommand{\R}{\mathbb R}
\renewcommand{\H}{H}
\newcommand{\ol}{\overline}
\renewcommand{\t}{\tau}
\renewcommand{\epsilon}{\varepsilon}
\renewcommand{\phi}{\varphi}
\newcommand{\p}{\partial}
\newcommand{\e}{\varepsilon}
\renewcommand{\theta}{\vartheta}
\newcommand{\C}{\mathbb C}
\renewcommand{\a}{\alpha}
\newcommand{\z}{\zeta}
\newcommand{\wh}{\widehat}
\newcommand{\s}{\sigma}
\newcommand{\barint}
{\rule[.036in]{.12in}{.009in}\kern-.16in \displaystyle\int}
\long\def\MSC#1\EndMSC{\def\arg{#1}\ifx\arg\empty\relax\else
     {\par\narrower\noindent
     {\small\it 2010 Mathematics Subject Classification.} \small #1\par}\fi}
\long\def\KEY#1\EndKEY{\def\arg{#1}\ifx\arg\empty\relax\else
     {\par\narrower\noindent
     {\small\it Keywords and Phrases.} \small #1\par}\fi}
\subjclass[2010]{53C17, 46E35 }
\keywords{Trace theorems, subRiemannian geometry}
\begin{document}

\date{\today} 
\title[Trace theorem     (\today )]{ A  Trace theorem  for  Martinet--type vector
fields  }

\author[D.~Gerosa]{Daniele Gerosa}
\address{Center for Mathematical Sciencies, Lund University  (Sweden)}
\email{daniele.gerosa@math.lu.se}

\author[R.~Monti]{Roberto Monti}
\address{Dipartimento di Matematica ``Tullio Levi-Civita''. Universit\`a di
Padova (Italy)}
\email{monti@math.unipd.it}

\author[D.~Morbidelli]{Daniele Morbidelli}
\address{Dipartimento di Matematica. Universit\`a di Bologna  (Italy)}
\email{daniele.morbidelli@unibo.it}

\begin{abstract} In $\R^3$ we consider the vector fields 
\[
 X_1 =\frac{ \partial }{\partial x},\qquad 
  X_2 =\frac{ \partial }{\partial y}+ |x|^\alpha \frac{ \partial }{\partial z},
\]
 where $\alpha\in\left[1,+\infty\right[$.   Let $\R^3_+
=\{(x,y,z)\in\R^3: z\geq 0\}$  be the (closed)
upper half-space and let $f\in C^1
(  \R ^3_+  )$
be a function such that $X_1f, X_2f \in L^ p(\R^3_+)$ for some $p>1$.
In this paper, we prove that the restriction of $f$ to the  plane $z=0$ belongs
to a suitable Besov space that is defined using the Carnot-Carath\'eodory metric
associated with $X_1$ and $X_2$ and the related perimeter measure. 
\end{abstract}

\maketitle

%   \setcounter{tocdepth}{1}
% \tableofcontents
  
%\subjclass[2010]{53C17,49Q20}
%\keywords{}

 \renewcommand{\C}{\mathbb C}

\renewcommand{\H}{H^n}

\newcommand{\B}{\mathcal B}

\renewcommand{\Re}{\mathrm{Re}}

\newcommand{\nablalfa} {X} %{\nabla\!_\alpha}

\section{Introduction}

By a classical  result due to  Gagliardo \cite{Gagliardo57}, for any  $p>1$ 
and any bounded open set 
 $\Omega\subset\R^n$ with smooth boundary   there is a constant $C>0$
such that for any  
function $f\in C^1(\bar \Omega)$  the following \emph{trace estimate} holds:
\begin{equation}\label{gaglia}
 \int_{\p\Omega\times\p\Omega}
\frac{|f(x)-f(y)|^p}{|x-y|^{n+ps}}
d\mathcal{H}^{n-1}(x)d\mathcal{H}^{n-1}(y)  
\leq C\int_\Omega|\nabla f(x)|^p dx,
\end{equation}
where $s=1- 1/p$. The inequality extends to Sobolev functions,  showing 
that traces of $  W^{1,p} $-functions 
are well defined and have a fractional  
order of differentiability  $1-1/p$ at the boundary $\p\Omega $.

In this paper, we prove a similar trace estimate in a setting where
the gradient  of~$f$ in the right-hand side of 
\eqref{gaglia} 
is replaced by a subelliptic  gradient that, at some point of the boundary, may
be 
``tangential''.
In $\R^3$ we consider the vector fields
\[
 X_1 =\frac{ \partial }{\partial x},\qquad 
  X_2 =\frac{ \partial }{\partial y}+ |x|^\alpha \frac{ \partial }{\partial z},
\]
where    $\alpha \in\left[1,+\infty\right[$ is a real parameter. When $\alpha =
2$ the distribution of planes spanned by $X_1$ and $X_2$ is known as
Martinet-distribution. We denote the  $X$-gradient of a function $f\in C^1(\R^3)$ by 
$\nablalfa f : = (X_1 f, X_2f)$. 

Let $\R^3_+ =\{(x,y,z)\in\R^3: z\geq 0\}$ be the closed upper-halfspace
and $\Sigma = \R^2  =\{(x,y,z)\in\R^3: z=0\}$ its boundary.  
The plane $\Sigma$ is characteristic at all points where $x=0$, in the sense
that both
the vector fields
$X_1$ and $X_2$ are tangent to $\Sigma$, here.

According to a general procedure
introduced in \cite{GarofaloNhieu96} and studied in \cite{MontiSerraCassano}, the vector fields $X_1$ and
$X_2$ induce on $\Sigma$ a natural surface  measure, known as $X$-perimeter
measure.
 In the present setting, this $X$-perimeter measure is
\begin{equation} \label{millo}
  \mu = |x|^\alpha \mathcal L^2 ,
\end{equation}
where $\mathcal L^2$ is the Lebesgue measure in the plane.

We denote by $d$ the Carnot-Carath\'eodory metric on $\R^3$ induced by
$X_1,X_2$ and by $B(q,r)$ the metric ball centered at $q\in\R^3$ with radius
$r>0$. With abuse of notation, we   identify $u\in\R^2$ with $(u,0)\in\R^3$.
 
%In this paper, we prove a generalization of \eqref{gaglia}  where, in the
%right-hand side, the gradient of $f$ is replaced by the subelliptic gradient
%$\nablalfa f$ and   $\Omega=\R^3_+$. 

\begin{theorem} 
\label{tello} 
Let $  \alpha \in\left[1,+\infty\right[$, 
$  p\in\left]1,+\infty\right[  $ and   $s=1-1/p$. There exists a constant
$C>0$ depending on $\a$ and $p$   
such that any function $f\in  C ^1  (  \mathbb{R}^3_+   )  $
satisfies 
\begin{equation} \label{equa}
\int_{\mathbb{R}^2
\times \mathbb{R}^2} \frac{|f(u,0) - f(v,0)|^p  }{d(u,v)^{ps} \mu(B(u,d(u,v)))}
\, d \mu(u) \, d \mu(v)  \le C 
\int_{\mathbb{R}^3_+ } 
|\nablalfa f (x,y,z)|^p dx dy dz.
\end{equation}
\end{theorem}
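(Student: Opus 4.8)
The plan is to reduce the subelliptic trace inequality to a one-dimensional estimate along horizontal curves, following the general philosophy of Gagliardo's original argument but carefully adapted to the degeneracy of the vector fields at $x=0$. The first step is to understand the Carnot--Carath\'eodory balls on $\Sigma = \R^2$ and the measure of $\mu$-balls. I would establish the ball-box estimate: for $u = (x,y)$ and $r>0$,
\[
\mu(B(u,r)) \simeq r^2 \bigl( |x| + r \bigr)^\alpha,
\]
which reflects the fact that near the characteristic set $\{x=0\}$ the metric is genuinely subRiemannian and away from it essentially Riemannian. This controls the denominator in \eqref{equa} and shows that $\mu$ is a doubling measure with respect to $d$ on $\Sigma$. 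The key intermediate object is then a \emph{subelliptic Besov norm} on $\Sigma$: the left-hand side of \eqref{equa} is, up to constants, the square (power-$p$) of the norm of $f(\cdot,0)$ in $B^s_{p,p}(\Sigma, d, \mu)$, and the strategy is to estimate this norm.

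**Main steps.** First, I would connect two points $u,v \in \Sigma$ with $d(u,v)$ small by a horizontal path that rises into $\R^3_+$ to height roughly $z \simeq d(u,v)^{1+\alpha}$ (the natural scaling for the $z$-variable), using $X_1$ to move in $x$, then $X_2$ to move in $y$ picking up the correct amount of $z$, then coming back down: since $X_2$ has $z$-component $|x|^\alpha$, the height gained is comparable to the ball-box estimate and consistent with the weight $|x|^\alpha$ in $\mu$. Along such a path one writes $f(u,0) - f(v,0)$ as an integral of $X_1 f$ and $X_2 f$, takes absolute values, raises to the $p$-th power and applies H\"older's inequality, introducing a factor $d(u,v)^{p-1}$ from the length of the path. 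Second, I would integrate the resulting bound over $v$ in the ball $B(u, c\,d(u_0,u))$ or, more precisely, set up a Fubini/averaging argument: for fixed $u$, average over $v$ in a suitable ball and use a change of variables sending $(u,v,\text{path parameter})$ to $(u, \text{point on path})$ with controlled Jacobian, converting the double boundary integral into the single bulk integral $\int_{\R^3_+} |\nablalfa f|^p$. The doubling property of $\mu$ and the ball-box formula are what make the Jacobian computations and the summation over dyadic annuli $d(u,v) \simeq 2^{-k}$ work out with a convergent geometric series, the convergence being exactly where the exponent $ps = p-1 > 0$ enters.

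**Main obstacle.** The hard part will be handling the characteristic set $\{x=0\}$, where the vector fields become tangent to $\Sigma$ and the measure $\mu = |x|^\alpha\,\mathcal L^2$ degenerates. Near there, a horizontal path joining two points of $\Sigma$ must make an essential excursion into the interior $z>0$, and one must show that the "cost" of this excursion — measured by $\int |\nablalfa f|^p$ over the tube swept out — is correctly balanced against the weight $|x|^\alpha$ appearing in both $d\mu(u)\,d\mu(v)$ and in $\mu(B(u,d(u,v)))$. Concretely, I expect the technical core to be a careful case analysis: (i) both $u,v$ far from $\{x=0\}$ relative to $d(u,v)$, where the estimate is essentially the Euclidean/Riemannian Gagliardo inequality with a harmless weight; (ii) $u,v$ in the "characteristic regime" $|x|, |x'| \lesssim d(u,v)$, where the subRiemannian scaling $z \simeq d(u,v)^{1+\alpha}$ and the precise form of the ball-box estimate must be used, and where one exploits that $\alpha \geq 1$ to keep certain integrals convergent. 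Patching these regimes together, and choosing the family of connecting paths so that the induced change of variables has a uniformly controlled Jacobian across the degeneracy, is where the bulk of the work lies; the role of the hypothesis $\alpha\ge 1$ (rather than $\alpha>0$) is presumably to guarantee that $X_1,X_2$ together with their commutators still satisfy a usable form of the ball-box principle and that the relevant weights are locally integrable against the path construction.
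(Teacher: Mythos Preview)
Your high-level strategy --- Gagliardo-type integration along horizontal paths, with a case split between a noncharacteristic regime $|x|,|x'|\gtrsim d(u,v)$ and a characteristic regime $|x|,|x'|\lesssim d(u,v)$ --- is exactly the paper's approach. But your ball formula is wrong in the noncharacteristic regime: since the planar distance satisfies $d((x,y,0),(x',y',0))\simeq |x-x'|+|y-y'|+|x|^{1/2}|y-y'|^{1/2}$, when $|x|\gg r$ the trace of $B(u,r)$ on $\Sigma$ is a thin rectangle of dimensions roughly $r\times r^2/|x|$, and one gets $\mu(B(u,r))\simeq r^3|x|^{\alpha-1}$, not $r^2|x|^\alpha$. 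The true Besov kernel is therefore larger than your sketch assumes by a factor $|x|/r\gg 1$, so the noncharacteristic case is \emph{not} ``essentially the Euclidean Gagliardo inequality with a harmless weight''; it needs its own nontrivial construction.

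Beyond this, the paper's technical engine is not H\"older plus a dyadic geometric sum but the Minkowski integral inequality (to swap the path integral inside the $d\mu(u)\,d\mu(v)$ integration) followed by the Hardy inequality $\int_0^\infty\bigl(\tfrac1r\int_0^r g\bigr)^p\,dr\le C_p\int_0^\infty g^p\,dr$, applied segment by segment after a coarea slicing in $r=d(u,v)$. The path construction is also considerably more rigid than ``rise to height $d^{1+\alpha}$ and return'': the paper uses five explicit segments along $\pm X_1,\pm X_2$ in the characteristic case, and six segments along $\pm X_2,\pm(X_1+X_2)$ in the noncharacteristic case (to emulate the commutator $[X_2,X_1]\sim\partial_z$ needed to descend in $z$), each with an algebraically determined switching time $\tau=\tau(u,v)$ chosen so the path lands exactly at $v$. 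Every segment then demands its own change of variables to reduce to $\int_{\R^3_+}|Xf|^p$, and controlling the Jacobians --- in particular bounding $\partial_{x'}\tau$ in the commutator pieces --- is isolated as a separate lemma. Your instinct that Jacobian control across the degeneracy is the crux is correct, but it does not collapse to a single uniform bound; the bulk of the paper is precisely these segment-by-segment computations.
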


The  Besov seminorm in the left-hand side is defined in terms of
the metric $d$ and of the measure $\mu$. When $d$ is the
standard metric and $\mu$ is the Lebesgue measure,
the seminorm reduces to the   one in the left-hand side of \eqref{gaglia}.

This seminorm was first  introduced by Danielli,
Garofalo and Nhieu in
\cite{DanielliGarofaloNhieu06}, where   a metric 
approach to the problem is developed.  
The authors prove trace and lifting theorems for   $(\epsilon,\delta)$-domains 
with \emph{Ahlfors regular} boundary.
The  $(\epsilon,\delta)$-property is in general difficult to check because of
the presence of boundary characteristic points. For systems of H\"ormander
vector
fields with step 3, it may fail even for ``flat'' or analytic boundaries, see
\cite{MontiMorbidelli05}.
In a companion paper \cite{MontiMorbidelli18}, we are able to show the
$(\epsilon,\delta)$-property for a different family of vector fields related to
generalized Siegel domains.
The Ahlfors
regularity of the measure $\mu$ in \eqref{millo} will be studied in Section~\ref{AL}.

The classical proof of \eqref{gaglia} by Gagliardo  relies on an elegant
construction of families
of curves transversal to the surface  
$\partial\Omega$ and connecting pairs of points on the boundary. The
estimate is achieved by an integration of the gradient of the function along
such curves. This technique can be extended to  the subelliptic setting if
$\p\Omega$ does not contain {\emph {characteristic points}}.
Indeed,  in the noncharacteristic case the construction of transversal
\emph{horizontal curves} is  easy  because at any noncharacteristic
boundary point there is at least one vector field transversal to the tangent
space to the boundary. 
Trace inequalities in this setting are proved   by Berhanu and Pesenson
in \cite{BerhanuPesenson}, by Bahoury, Chemin and Xu in \cite{BahouriChaminXu05}
for
vector fields of step 2 and by the authors for general 
H\"ormander vector fields in \cite{MontiMorbidelli02}.

In the characteristic case, the construction of horizontal curves entering the
domain from
boundary points is much more delicate. Some  trace
theorems are known also in this case,  mainly in two
classes of examples. The first one is  the
Heisenberg group, see the contribution by Bahouri, Chemin and Xu
\cite{BahouriCheminXu09} for some  characteristic surfaces.
A second class of examples  is that of \emph{diagonal} vector fields, i.e., a
system of $n$  vector fields in~$\R^n$ of the form $X_i = w_i(x)
\frac{\partial}{\partial x_i}$, $i=1,\ldots,n$, with suitable weights~$w_i$. See
the results of Franchi \cite{Franchi86} and  the authors
\cite{MontiMorbidelli02}.

In this paper, we are able to  deal with the following three
difficulties:
\begin{itemize} %[nosep] 
\item[--]  the  plane $z=0$  contains characteristic points and actully a
whole line, the $y$-axis; 
\item[--]  the vector fields can have arbitrarily large step, depending on
$\alpha\geq 1$;
\item[--]  the vector fields are not of diagonal type.
 \end{itemize}
In a future work, we plan to generalize our results to more general surfaces and
to more general  families of vector fields.

In our proof of \eqref{equa}, it is enough to estimate  the difference
$f(u,0)-f(v,0)$  for noncharacteristic points. However,   both $(u,0)$ and
$(v,0)$
  may be arbitrarily close to 
 the characteristic line.  
The choice of the curves connecting them is rather 
 delicate and must take into account ``how much'' close to the
characteristic set the points are.
Once the correct construction is devised, the trace estimate is obtained by
integrating the subelliptic gradient along such curves and using the Minkowski
and Hardy integral inequalities.  
The correct estimate of the Besov seminorm 
must be  split in several sub-cases and each of them requires
a separate effort.

The argument requires a precise description of the size of the
Carnot-Carath\'eodory  balls of the distance $d$ associated with  the
vector fields $X_1, X_2$. Since $\a$ can be noninteger, we cannot use the
ball-box theorems of Nagel, Stein and Wainger
\cite{NagelSteinWainger}. 
For this reason, in Section \ref{ullo} we   give a self contained proof of
the ball-box estimate for  $d$, which  has an independent
interest.

 %\step{Plan of the paper.} 
 
\step{Notation}. By $C_\alpha>0$ we denote a constant depending on $\alpha\geq
1$
that may change from line to line.
By $C_{\alpha,p}>0$ we denote a constant depending $\alpha\geq 1 $ and $p>1$
that may
change from line to line.
For $a,b>0$, we use the standard notation     $a\simeq b$ 
meaning that 
 $a\leq C  b$ and $b\leq C  a$ for an absolute constant $C$ that may depend on
$\alpha$ and/or $p$.

 \section{Structure of the metric}
\label{ullo}

Let $d$ be the Carnot-Carath\'eodory  distance associated with the vector
fields $X_1 =\p_x$  and $X_2=\p_y+|x|^\a\p_z$. 
The construction of $d$ is well-known and can by found in~\cite{NagelSteinWainger}.

When  $\a=2$,  the vector fields 
$X_1$ and $X_2$ span a distribution of 2-planes in $\R^3$ known as 
\emph{Martinet}-distribution.
When  $\a $ is an even number,  the vector fields satisfy the H\"ormander
condition with
step $\a+1$ and the structure of metric balls follows from 
\cite{NagelSteinWainger}. 

When $\alpha$ is not even, the results of \cite{NagelSteinWainger}
cannot be used.
For this reason, we give here a self-contained proof of the relevant estimates.
The case $\a=1$ of the familiar Heisenberg group is
not included in our discussion. However, with some  minor adaptations,   the
results of this section  hold
verbatim   for vector
fields of the form $X_1 = \p_x $ and $X_2=\p_y+|x|^{\a-1}x \p_z$, including the
Heisenberg vector fields in
the limit case $\a=1$.

By the particular structure of the vector fields, the distance $d$ possesses the
following invariance properties
\begin{subequations}\begin{align}
\label{cip}  & 
 d((x,y,z), (x', y', z',))=d((x,y+\eta ,z+\zeta ),(x',y'+\eta ,z'+\zeta )),
 \\ & d((rx,xy,r^{\a+1}z), ( rx',ry',r^{\a+1} z' ))= rd((x,y,z), (x', y', z')),
 \\&   
 \label{cetra} d((-x,y,z),(-x',y',z'))=   d((x,y,z),(x',y',z'))\color{black}
\end{align}
\end{subequations}
for all $(x,y,z), (x',y',z')\in\R^3$, $\eta,\zeta\in\R$ and $r\geq 0$.
In this section, we describe the structure of  $d$ in terms
of an equivalent  function defined  by algebraic functions.

For  $\alpha\geq 1$, we define the function 
$\delta:\R^3\times\R^3\to[0,\infty)$
\begin{equation*} 
 \delta((x,y,z),(x',y',z')) :=   |x'-x|+|y'-y|+  \min \bigg\{ 
|\zeta |^{1/(\alpha+1)},
 \frac{ |\zeta |^{1/2}}{ |x
|^{(\alpha-1)/2}}\bigg\}, 
\end{equation*} 
where we let 
   $\zeta = z - z' +|x|^{\alpha}(y' - y)$. 
In the definition above,  we agree that the minimum is $  |\zeta |^{1/(\alpha
+1)}$  if $x=0$, and  is  $ 0$   if $\zeta =0$.

\begin{theorem}\label{ddss}   For   $\alpha\geq 1$, let $d$ be the
Carnot-Carath\'eodory metric
induced on $\R^3$ by the vector-fields $X_1,X_2$.
There exists a constant $C_0>0$, depending on $\a$, such that for all $p,q \in \R^3$  we have 
 \begin{equation}
C_0^{-1} \delta(p,q) \leq  d(p,q) \leq C_0  \delta(p,q)  .
 \end{equation}  
\end{theorem}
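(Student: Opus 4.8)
The strategy is the standard one for ball-box type estimates: prove the two inequalities $d\lesssim\delta$ and $\delta\lesssim d$ separately, reducing in each case to a normalized configuration by means of the invariance properties \eqref{cip}--\eqref{cetra}. Using the translation invariance in $y$ and $z$ we may always assume $q=(x',0,0)$, and using the reflection $x\mapsto -x$ we may assume $x\geq 0$; the dilation then allows us, in many sub-cases, to normalize one of the quantities $|x'-x|$, $|y'-y|$ or $|\zeta|$ to be of unit size. Throughout, $\zeta = z-z'+|x|^\alpha(y'-y)$ is the natural ``vertical'' coordinate adapted to the vector fields, chosen so that moving along $X_2$ from $p$ changes $z$ but keeps $\zeta$ fixed, while moving along $X_1$ changes $x$ and hence changes $\zeta$ through the factor $|x|^\alpha$.

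\textbf{The upper bound $d\lesssim\delta$.} Here I would exhibit explicit horizontal curves. First move along $X_1$ to adjust the $x$-coordinate from $x$ to $x'$ (cost $|x'-x|$), then along $X_2$ to adjust $y$ (cost $|y'-y|$); after these two moves only the vertical coordinate is left to correct, and the remaining displacement in the $\zeta$-variable is comparable to the original $|\zeta|$. To kill a vertical displacement $\zeta$ one uses a small ``loop'': go out to $x$-level $\rho$, travel in $y$ by an amount $t$, come back; this changes $z$ by roughly $\rho^\alpha t$ at a cost $\simeq \rho + t$. Optimizing $\rho+t$ subject to $\rho^\alpha t\simeq|\zeta|$ gives cost $\simeq|\zeta|^{1/(\alpha+1)}$. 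However, if the base point already has $x$-coordinate $x$ large compared to $|\zeta|^{1/(\alpha+1)}$, it is cheaper to keep $x\simeq x$ and only move in $y$ by $t\simeq|\zeta|/|x|^\alpha$, at cost $\simeq|\zeta|/|x|^\alpha$; but one must also be careful that $x$ itself may be changed along the way, which is why the sharp estimate for the vertical cost is $\min\{|\zeta|^{1/(\alpha+1)},\,|\zeta|^{1/2}/|x|^{(\alpha-1)/2}\}$ — the second term arising from a loop of radius $\rho\simeq x + |\zeta|^{1/2}/x^{(\alpha-1)/2}$, balanced so that $x^{\alpha-1}\rho\, t\simeq|\zeta|$ with $t\simeq\rho$. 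Concatenating these pieces and using subadditivity of $d$ gives $d(p,q)\le C_0\,\delta(p,q)$.

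\textbf{The lower bound $\delta\lesssim d$.} This is the harder direction. The plan is to take any horizontal curve $\gamma=(x(t),y(t),z(t))$, $t\in[0,L]$, parametrized so that $\gamma'=a X_1(\gamma)+bX_2(\gamma)$ with $|(a,b)|\le 1$ and $L=d(p,q)$ up to a fixed factor, and to bound each of the three terms of $\delta$ by $C_0 L$. The terms $|x(L)-x(0)|$ and $|y(L)-y(0)|$ are immediate since $|x'|,|y'|\le 1$ along the curve. For the vertical term one computes $\frac{d}{dt}\big(z(t)+|x(t)|^\alpha(y(0)-y(t))\big)$ and must show its total variation is controlled by a suitable power of $L$. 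This is where the two regimes genuinely appear: if the curve stays in the region $|x(t)|\lesssim L$, then $|z(L)-z(0)|\lesssim \int |x|^\alpha|b|\,dt\lesssim L^{\alpha+1}$, giving the first term; if instead $|x(t)|\gtrsim x$ throughout (i.e. the curve does not travel far from its initial $x$-level relative to that level), a finer estimate using $|x(t)^\alpha - x^\alpha|\lesssim x^{\alpha-1}|x(t)-x|\lesssim x^{\alpha-1}L$ yields the bound $|\zeta|\lesssim x^{\alpha-1}L^2$, i.e. the second term. The main obstacle — and the part that will require the most case analysis — is patching these two regimes along a single curve that may oscillate across many $x$-scales: one has to decompose $[0,L]$ according to dyadic levels of $|x(t)|$, estimate the vertical increment on each piece, and sum, checking that the worst case is exactly the stated minimum. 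Once both inequalities are established, the theorem follows by taking $C_0$ to be the larger of the two constants.
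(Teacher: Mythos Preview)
Your outline has the right architecture, but there is one genuine gap and one unnecessary complication.

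\medskip

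\textbf{Upper bound: the ``comparable'' claim is false.} After moving along $X_1$ from $x$ to $x'$ and then along $X_2$ from $y$ to $y'$, you land at $(x',y',z+|x'|^\alpha(y'-y))$, so the remaining vertical defect is
\[
z'-z-|x'|^\alpha(y'-y)=-\zeta-(|x'|^\alpha-|x|^\alpha)(y'-y),
\]
which is \emph{not} comparable to $|\zeta|$: take $x=1$, $x'=0$, $y=0$, $y'=1$, $z=0$, $z'=1$, so $\zeta=0$ but the defect is $1$. Moreover, the loop you perform afterwards is based at $x'$, not $x$, so even with the correct defect you must still check that $\min\{|\cdot|^{1/(\alpha+1)},|\cdot|^{1/2}/|x'|^{(\alpha-1)/2}\}$ is bounded by the $\delta$ built with $|x|$. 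The paper confronts exactly this issue: it moves first along $(x'-x)X_1+y'X_2$ and then applies the vertical estimate at the new base $x'$, which forces a nontrivial case analysis (their Cases~A, B, B1, B2) to bound the extra term $\omega=|y'-y|(|x|+|x'|)^{\alpha-1}|x'-x|$. A cleaner fix, which avoids the cases entirely, is to reverse your order: do the loop \emph{first} at base $x$ to kill exactly $\zeta$, then flow along $X_2$ (which now lands at height $z'$ on the nose), then along $X_1$. That sequence gives cost $\le C_0\delta(p,q)$ directly.

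\medskip

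\textbf{Lower bound: no dyadic decomposition is needed.} You anticipate having to split $[0,L]$ into dyadic $|x(t)|$-levels and sum. In fact a single line suffices: writing $x(t)=x+\widehat x(t)$ with $|\widehat x(t)|\le t$ and computing
\[
|\zeta|=\Big|\int_0^T\big(|x(s)|^\alpha-|x|^\alpha\big)\dot y(s)\,ds\Big|
\le C_\alpha\int_0^T(|x|^{\alpha-1}+s^{\alpha-1})\,s\,ds
\le C_\alpha\big(|x|^{\alpha-1}T^2+T^{\alpha+1}\big),
\]
one immediately gets $\min\{|\zeta|^{1/(\alpha+1)},|\zeta|^{1/2}/|x|^{(\alpha-1)/2}\}\le C_\alpha T$. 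The two regimes are produced automatically by the two monomials in the bound; there is no patching to do. This is exactly the paper's Step~1, and it is where your proposal is more elaborate than necessary.
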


\begin{proof} 
For $\a\geq 1$,  we will   use the equivalence
\begin{equation}
 \label{dasotto}  |u^\a-v^\a|  \geq   C_\a   
(|u|+|v|)^{\a-1}|u-v|,\quad \text{for all } u,v\in\left[0,+\infty\right[.
\end{equation} 
By the translation invariance \eqref{cip}, we can assume that  
$ p = (x,y,z)=(x,0,0)$.  We also let $q = (x',y',z')$.

\medskip

\textit{Step 1.} We first show the estimate  $\delta \leq C_0 d$. Let
$\gamma:[0,T]\to \R^3$ 
 be a horizontal curve with   $\gamma(0) = p $, $\gamma(T) = q$ and $\dot \gamma
=h_1(t) X_1(\gamma)+h_2(t)X_2(\gamma)$ with $|(h_1 , h_2)|\leq 1$ a.e.
The functions 
 \begin{equation*}
  x(t)=x+\int_0^t h_1 (s)ds=:x+\widehat x(t),\qquad y(t)=\int_0^t h_2(s)ds,
 \end{equation*}
satisfy the  estimates $|\widehat x(t)|\leq t$ and $|y(t)|\leq t$, and thus 
 \begin{equation}\label{daunno} 
  |x' - x| =|x(T)-x(0)| \leq T\qquad \text{ and }\quad |y'|= | y(T) -
y(0)|\leq T. 
 \end{equation} 

The quantity  $\zeta = z - z' +|x|^{\alpha}(y' - y)= |x|^{\alpha} y'   - z'
$ \color{black} 
satisfies  
  \begin{equation*}
\begin{aligned}
 |\zeta  | & 
  =\Big| \int_0^T (|x(s)|^\alpha  -|x|^\alpha)\dot y(s) ds  \Big|
 \leq C_\alpha  
    \int_0^T (|x|^{\alpha -1}+s^{\alpha -1}) s ds
    \leq C_\alpha ( |x|^{\alpha -1}T^2+T^{\alpha+1}),
\end{aligned}
 \end{equation*}
% \color{black}
 and this implies 
that either
  $|\zeta | \leq C_\alpha |x|^{\alpha -1}T^2    $ or
$| \zeta| \leq C_\alpha T^{\alpha +1} $. This is equivalent to  
\begin{equation*}  \min\Big\{ \frac{|\zeta  |^{1/2}}{|x|^{(\alpha -1)/2}} , 
|\zeta |^{1/(\alpha +1)}\Big\} \leq C_\alpha T, 
\end{equation*}
and this estimate  together with \eqref{daunno} concludes the proof of
\textit{Step 1}.

 \medskip

\textit{Step 2.} We prove the estimate $d\leq C_ 0  \delta$ in the case when
points are one above
the other. Namely, we claim that  $d((x ,y,z),  (x,y,z'))\leq C_ 0   \delta
((x ,y,z), 
(x ,y,z'))$ for all $x,y,z,z'\in\R$.
As above, we can assume that $y=z=0$.

We prove the claim  for $x\geq 0$  and $z>0$. The cases  $x<0$  and $z<0$ 
are analogous. 
For  $u>0$, let $\kappa:[0, 4u]\to \R^2 $ be 
 the plane curve with unit speed  which  connects the points  $(x,0)$, $(x+u,
0)$, $(x+u,u) $,  $(x,u)$   and $(x,0)$, and let  $\kappa(t)=(x(t),
y(t))$. This path encloses a square which we denote by $R_u$. Let  $t\mapsto
\gamma(t) =  (x(t), y(t), z(t))$ be the horizontal lift of $\kappa$ starting
from $z(0)=0$. By Stokes' theorem  
 \begin{equation} \label{ulla}
  \begin{aligned}
z(4u) &=\int_\kappa \xi^\alpha d\eta  
=\int_{R_u}\alpha \xi^{\alpha -1}d\xi
d\eta      
%=\int_0^u d\eta\int_x^{x+u}d\xi(\alpha\xi^{\alpha -1})
%\\&
=u((x+u)^\alpha-x^{\alpha})
% \\&
\geq C_\a u^2(x^{\alpha -1}+u^{\alpha -1}),
\end{aligned}
 \end{equation}
where we used \eqref{dasotto}.  By the definition of $d$, we have 
\[
  d ((x,0,0),  (x,0,z' )) \leq 4   
 \min\{ u> 0 : z(4u)=z'\}, 
\]
and, by \eqref{ulla}, the number  $u$ realizing the minimum
satisfies 
\[
 u \leq  C_\alpha \min\Big\{
  \frac{ | z '| ^{1/2}}{ |x|^{(\alpha -1)/2}}, 
  | z'| ^{1/(\alpha +1)}\Big\}.
\]
This proves the claim.
%
%
%In order to get our upper estimate we  require  $z(4u)=\zeta$ and minimize the
%length $4u$. We use the fact that $\int_{R_u}\alpha\xi^{\alpha -1}d\xi d\eta$
%increases with $u$ and we get 
%\begin{equation*}
%\begin{aligned}
% d ((x,0,0),& (x,0,\zeta)) \lesssim  
% \min\{ u: z(4u)=\zeta\} 
% =\min\Big\{ u:\int_{R_u}\alpha\xi^{\alpha -1}d\xi d\eta =\zeta\Big\}
%\\&
%  =   \min\Big\{ u:\int_{R_u}\alpha\xi^{\alpha -1}d\xi d\eta \geq\zeta \Big\}
%  \leq \min \Big\{ u: u^2(x^{\alpha -1}+u^{\alpha -1}) \geq  C \zeta\Big\}
%\\&
%  \leq \min \Big\{ u:  u^2 x^{\alpha -1}\geq C\zeta \quad\text{ or }\quad 
%u^{\alpha +1}  \geq  C 
%  \zeta\Big\}
%\\& \simeq
% \min \Big\{ u:   u\geq C \min\Big\{
%  \frac{ \zeta^{1/2}}{ x^{(\alpha -1)/2}}, 
%  \zeta^{1/(\alpha +1)} \Big\}   \Big\}
%  =C \min\Big\{
%  \frac{ \zeta^{1/2}}{ x^{(\alpha -1)/2}}, 
%  \zeta^{1/(\alpha +1)}\Big\}
%\end{aligned}
%\end{equation*}
%as required.

\medskip

\textit{Step 3.} We prove the estimate $d\leq C_0   \delta$
for arbitrary points $p = (x,0,0)$ and $q = (x',y',z')$. By the  triangle
inequality we have
\begin{equation}\label{secchio} 
 \begin{split}
 d(p, q) & \leq d\big(p, \mathrm{e}^ {(x'-x)X_1+y' 
X_2}(p)\big) +
 d\big( \mathrm{e}^ {(x'-x)X_1+y' 
X_2}(p), q\big)
\\
&  \leq  
 |x-x'| +|y'|+ d\big( \mathrm{e}^ {(x'-x)X_1+y' 
X_2}(p),q\big)
\\
&  \leq  
 \delta(p,q)  + d\big( \mathrm{e}^ {(x'-x)X_1+y' 
X_2}(p),q\big),
\end{split}
\end{equation}
where we adopt the standard notation $e^{Z}(p)$ or $\exp(Z)(p)$
to denote the value at time $1$ of the integral curve of  the vector 
field $Z$ starting from $p$ at $t=0$. An easy computation shows that 
\[
  \mathrm{e}^ {(x'-x)X_1+y' 
X_2}(p) =\Big( x',y',  y' \int_0^1 |x+s(x'-x)|^\alpha ds\Big),
\]\color{black}
i.e., the point is above $q$. By the  \textit{Step 2}, we have
\[
 d\big( \mathrm{e}^ {(x'-x)X_1+y' 
X_2}(p),q\big) \leq C_ 0 \delta \big( \mathrm{e}^ {(x'-x)X_1+y' 
X_2}(p),q\big).
\] 
Now, letting 
\[
  \zeta = z'   -|x|^{\alpha } y' \qquad \textrm{and}\qquad 
  \zeta ' = z' -y'  \int_0^1 |x+s(x'-x)|^\alpha ds,
\]
to conclude the estimate  it suffices to show that  
\begin{equation}\label{dappino} 
\begin{aligned}
  \min  \bigg\{  |\zeta' |^{1/(\alpha+1)} ,
 \frac{|\zeta ' |^{1/2 }}{ |x' |^{(\alpha -1)/ 2} } 
 \bigg\}
   \leq C_{\alpha} \bigg( |x-x'|+|y'|+ 
   \min \bigg\{ \ |\zeta  |^{1/(\alpha
   +1)},
 \frac{ |\zeta  |^{1/2}}{ |x |^{(\alpha-1)/2
}}\bigg\}\bigg).
   \end{aligned}
\end{equation} 
First of all we have  
\begin{equation}\label{pejo} 
\begin{split}
%   | \zeta'|  & =  \Big| z'  
%  -  y '  \int_0^1 (x+s(x' -x))^\alpha ds \Big|
% \\
% &
%  \leq |\zeta |+|y' | \, \Big|\int_0^1\Big( (x+s(x' - x))^\alpha -
% x^\alpha)\Big) ds
% \Big| \leq C_\alpha(|\zeta|+\omega),
% \\  
| \zeta'|  & =   \Big| z'  
 -  y '  \int_0^1 |x+s(x' -x)|^\alpha ds \Big|
\\
&
 \leq |\zeta |+|y' | \, \Big|\int_0^1\Big( |x+s(x' - x)|^\alpha -
|x|^\alpha)\Big) ds
\Big| \leq C_\alpha(|\zeta|+\omega), 
\end{split}
\end{equation}
where we let $\omega =  |y'| (|x|+|x' |)^{\alpha -1}|x' - x|$. 
To prove \eqref{dappino} we distinguish the following two cases:

\noindent\textit{Case A}: $|\zeta |^{1/(\alpha +1)}\leq
\dfrac{|\zeta |^{1/2}}{|x|^{(\alpha - 1)/2}}$, or, equivalently, $|x|\leq
|\zeta|^{1/(\alpha+1)}$.

\noindent\textit{Case B}:  $|\zeta |^{1/(\alpha +1)}\geq
\dfrac{|\zeta |^{1/2}}{|x|^{(\alpha - 1)/2}}$, or, equivalently, $|x|\geq
|\zeta|^{1/(\alpha+1)}$.

\medskip

In the \textit{Case A}, the claim \eqref{dappino} is implied by  
\begin{equation*}
 \min\Big\{(|\zeta|+\omega)^{1/(\alpha+1)},
\frac{(|\zeta|+\omega)^{1/2}}{|x'|^{(\alpha
-1)/2}} \Big\}
 \leq C_{\alpha} \Big(  |x-x'|+ |y' |+|\zeta|^{1/(\alpha+1)}\Big) .
\end{equation*}
The estimate of $|\zeta|^{1/(\alpha +1)}$ is trivial. The quantity
$\omega$ is estimated in the following way: 
\[
\begin{split}
  \omega & \leq C_\alpha  |y'| (|x|+|x-x'|)^{\alpha -1}|x - x'|
% \\
% &
 \leq C_\alpha \big(  
        |y'|^{\alpha +1}+ | x|^{\alpha +1}+|x-x'| ^{\alpha +1} \big) 
\\
 &\leq 
C_\alpha \big( 
   |y' |^{\alpha +1}+ 
|\zeta |+|x-x'| ^{\alpha +1}\big),
\end{split}
\]
and the claim follows.

 \medskip

In the \textit{Case B}, the claim  \eqref{dappino} is implied by  
\begin{equation*}
 \min\Big\{(|\zeta|+\omega)^{1/(\alpha+1)},
\frac{(|\zeta|+\omega)^{1/2}}{|x'|^{(\alpha
-1)/2}} \Big\}
 \leq C_{\alpha} \Big(  |x-x'|+ |y' |+
 |\zeta  |^{1/2}  / |x |^{(\alpha-1)/2 } \Big)  . 
\end{equation*}
 
\medskip 
\noindent\textit{Sub-case B1}: $|x'-x|\leq\frac 12 |x|$.
In this sub-case, we have 
$ |x'|\simeq |x|$ and thus  the  term with $\zeta$ is easily
estimated, because
$
 \frac{|\zeta|^{1/2}}{|x'|^{(\alpha-1)/2}}
\simeq
\frac{|\zeta |^{1/2}}{|x|^{(\alpha-1)/2}}.
$

We estimate  the term with $\omega$. From $\omega \simeq |x|^{\alpha
-1}|y'|\,|x-x'|$ we deduce that  
\begin{equation*}
\begin{aligned}
   \frac{\omega^{1/2}}{|x'|^{(\alpha-1)/2}}\simeq \frac{\big(|x|^{\alpha
-1}|y'|\,|x-x'| \big)^{1/2}}{|x|^{(\alpha-1)/2}}\simeq
|y'|^{1/2}|x-x'|^{1/2},
\end{aligned}
\end{equation*}
which is smaller than $|x-x'|+|y'|$, as required.

\medskip 
\noindent\textit{Sub-case B2}: $|x-x'|>\frac 12 |x|$. We claim that 
\begin{equation*}
 |\zeta |^{1/(\alpha +1)}\leq C_\alpha \Big( 
|x-x'|+\frac{|\zeta|^{1/2}}{|x|^{(\alpha-1)/2}} \Big) .
\end{equation*}
Indeed,   the function $h(s) = s+\frac{|\zeta|^{1/2}}{ s^{(\alpha-1)/2}}$ attains
the minimum   on $(0,\infty)$ at the point  
$s_{\textrm{min}}\simeq |\zeta |^{1/(\alpha+1)}$.

To end the discussion of the \textit{Sub-case B2}, we estimate the  term
with $\omega$:
\begin{equation*}
\begin{aligned}
\omega  \leq 
|y' |(2|x|+|x-x'|)^{\alpha -1}|x-x'|\leq  C   |y' |\,|x-x'|^\alpha,
\end{aligned}
\end{equation*}
and  the  estimate $ 
 \omega^{1/(\alpha +1)} \leq C_\alpha(  |x-x'|+|y'|) $ follows.

\medskip

This concludes the proof of Theorem \ref{ddss}.
\end{proof}

\begin{remark}
 For all points $(x,y),(x',y')\in\R^2$ we have the equivalence
 \begin{equation}\label{piana} 
\begin{aligned}
   d((x,y,0), (x',y',0))&\simeq  |x-x'|+|y-y'| +|x|^{1/2}|y'-y|^{1/2}.
\end{aligned}
 \end{equation} 
To prove this, we start  from
\[
 \delta((x,y,0), (x', y', 0))=|x-x'|+|y-y'|+\min\big\{|x|^{\a/(\a+1)}|y'-y|^{1/(\a+1)},  
 |x|^{1/2}|y'-y|^{1/2 }\big\},
\]
and we observe that  the minimum is equivalent to the second term, because  
\begin{equation*}
|x|^{1/2}|y'-y|^{1/2 }\leq C_\alpha \big(     |y-y'| +
|x|^{\a/(\a+1)}|y'-y|^{1/(\a+1)} \big) .
\end{equation*}
\qed
 \end{remark}

 We rephrase the estimates in Theorem \ref{ddss} as a  ball-box  theorem.
For a fixed point $p = (x,y,z) \in\R^3$, define the mappings
$\Phi_1(p;\cdot),\Phi_2(p;\cdot):\R^3\to\R^3$:
\begin{equation*}
\begin{aligned}
 \Phi_1(p; u) &  =
  \Phi_1(  u)= \Big(x+u_1, y+u_2 , z+|x|^\alpha u_2 + |x|^{\alpha -1 }u_3\Big),
\\  
\Phi_2(p; u) &    =   \Phi_2(  u)= \Big(x+u_1, y+u_2 , z+|x|^\alpha u_2 + 
u_3\Big).
  \end{aligned}
\end{equation*}
We let 
$\| u\|_{1,1,2} = \max\{|u_1|, |u_2|, |u_3|^{1/2}\}$ and 
$
\| u\|_{1,1,\alpha +1}= \max\{|u_1|, |u_2|, |u_3|^{1/(\alpha+1)}\}$, and we
define the boxes
\[
 \begin{split}
  B_1(p, r) =\{\Phi_1(p;u):\| u\|_{1,1,2}<r\} 
\quad 
\textrm{and} 
\quad
 B_2  (p, r) =\{\Phi_2(p; u):\| u\|_{1,1,\alpha +1}<r\}.
 \end{split}
\]
Let $C_0>1$ be a
constant such that 
$C_0^{-1}{\delta}\leq  {d}\leq C_0{\delta}$ globally.

\begin{corollary}\label{boxxo} 
Let  $\eta >0$. There are constants $b_1(\eta  )$ and
$b_2(\eta )$ such that for all $p=(x,y,z) \in\R^3$ and $r>0$ we have:
 \begin{itemize}
  \item[i)]  if $|x|\geq \eta r $ then 
  \begin{equation}\label{1212} 
B_1\big(p, C_0^{-1}  r \big)\subset B (p,  r ) \subset 
B_1 \big(p, b_1(\eta ) r  \big) ; 
  \end{equation}
  
  \item[ii)] if $r\geq \eta|x|$, then 
  \begin{equation}\label{1313} 
B_2\big(p, C_0^{-1}  r  ) \subset  B (p, r )
\subset 
B_2\big(p, b_2(\eta ) r \big).
  \end{equation}
 \end{itemize}
\end{corollary}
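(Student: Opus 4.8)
The plan is to translate the metric estimate of Theorem~\ref{ddss} into the stated containments for the model boxes by a direct computation, keeping careful track of the two regimes ($|x|$ large versus $|x|$ small relative to $r$). First I would record what the box $B_1(p,r)$ actually is: a point $q=\Phi_1(p;u)$ with $\|u\|_{1,1,2}<r$ has $|x'-x|=|u_1|<r$, $|y'-y|=|u_2|<r$, and $\zeta=z-z'+|x|^\alpha(y'-y)=-|x|^{\alpha-1}u_3$, so that $|\zeta|^{1/2}/|x|^{(\alpha-1)/2}=|u_3|^{1/2}<r$; comparing with the defining formula for $\delta$, this shows $\delta(p,q)\le 3r$ whenever $q\in B_1(p,r)$, and conversely, if $\delta(p,q)<r$ and moreover $|\zeta|^{1/2}/|x|^{(\alpha-1)/2}\le|\zeta|^{1/(\alpha+1)}$ (i.e. this is the term realizing the min), then $q\in B_1(p,Cr)$ for an absolute $C$. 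The analogous bookkeeping for $B_2(p,r)$: there $\zeta=-u_3$ directly, so $|\zeta|^{1/(\alpha+1)}=|u_3|^{1/(\alpha+1)}<r$, and $q\in B_2(p,r)$ is essentially equivalent to $\delta(p,q)\lesssim r$ with the min realized by the $|\zeta|^{1/(\alpha+1)}$ term.

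With this dictionary in hand, part (ii) is the easier one. If $r\ge\eta|x|$, I claim $\delta(p,q)<C_0^{-1}r$ forces $q\in B_2(p,r)$: indeed $|x'-x|,|y'-y|<r$ is immediate, and whichever of the two terms realizes the minimum in $\delta$, one always has $|\zeta|^{1/(\alpha+1)}\lesssim \delta(p,q)+|x|^{?}\cdots$ — more precisely, using $|x|\le r/\eta$ one converts a bound on $|\zeta|^{1/2}/|x|^{(\alpha-1)/2}$ into a bound on $|\zeta|^{1/(\alpha+1)}$ up to a factor depending on $\eta$, since $|\zeta|^{1/(\alpha+1)}=\bigl(|\zeta|^{1/2}/|x|^{(\alpha-1)/2}\bigr)^{2/(\alpha+1)}\,|x|^{(\alpha-1)/(\alpha+1)}\lesssim_\eta (\text{that term})^{2/(\alpha+1)}r^{(\alpha-1)/(\alpha+1)}$, and then Young's inequality finishes it. Combined with Theorem~\ref{ddss} ($d\le C_0\delta$ and $\delta\le C_0 d$) this yields \eqref{1313} with $b_2(\eta)$ absorbing these $\eta$-dependent constants.

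For part (i), where $|x|\ge\eta r$, the roles swap: now I want to show that $d(p,q)<r$ forces $q\in B_1(p,b_1(\eta)r)$. From $d(p,q)<r$ and Theorem~\ref{ddss} we get $\delta(p,q)<C_0 r$, hence $|x'-x|<C_0 r$ and $|y'-y|<C_0 r$; the only issue is the $u_3$-coordinate, i.e. bounding $|\zeta|^{1/2}/|x|^{(\alpha-1)/2}$. If the minimum in $\delta$ is realized by that very term we are done immediately. If instead it is realized by $|\zeta|^{1/(\alpha+1)}<C_0 r$, then I use $|x|\ge\eta r$ to go back: $|\zeta|^{1/2}/|x|^{(\alpha-1)/2}=|\zeta|^{1/(\alpha+1)}\cdot\bigl(|\zeta|^{1/(\alpha+1)}/|x|\bigr)^{(\alpha-1)/2}\le C_0 r\cdot(C_0 r/(\eta r))^{(\alpha-1)/2}\lesssim_\eta r$. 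The reverse inclusion $B_1(p,C_0^{-1}r)\subset B(p,r)$ is just $\delta\le 3r$ on the box plus $d\le C_0\delta$, adjusting the constant $C_0$ (recalling the factor $3$ can be absorbed into the definition of $C_0$, or one states it with a harmless extra constant).

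The main obstacle I anticipate is purely organizational rather than deep: one must handle the degenerate conventions at $x=0$ (where the min in $\delta$ is declared to be $|\zeta|^{1/(\alpha+1)}$) and the fact that which of the two expressions realizes the minimum depends on the relative sizes of $|x|$ and $|\zeta|^{1/(\alpha+1)}$, exactly the Case~A/Case~B dichotomy from the proof of Theorem~\ref{ddss}. The hypotheses $|x|\ge\eta r$ and $r\ge\eta|x|$ are precisely what is needed to pin down, up to $\eta$-dependent constants, which regime one is in and thus which box is comparable to the ball; once that is recognized, each inclusion is a two-line interpolation estimate of the type written out above. No new geometric input beyond Theorem~\ref{ddss} is required.
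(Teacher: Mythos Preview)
Your proposal is correct and follows essentially the same route as the paper. The paper's proof also begins by recording that $q\in B_1(p,r)$ (resp.\ $B_2(p,r)$) is equivalent to $\max\{|x-x'|,|y-y'|,|\zeta|^{1/2}/|x|^{(\alpha-1)/2}\}<r$ (resp.\ with $|\zeta|^{1/(\alpha+1)}$), then uses the hypothesis $|x|\ge\eta r$ or $r\ge\eta|x|$ to convert a bound on one term of the minimum in $\delta$ into a bound on the other, via exactly the power-interpolation identities you wrote down; your mention of Young's inequality is unnecessary since the direct substitution already gives a linear bound in $r$, and you should tidy up the constant in the claim ``$\delta(p,q)<C_0^{-1}r$ forces $q\in B_2(p,r)$'' (for the right inclusion you start from $d<r$, hence $\delta<C_0 r$, not $C_0^{-1}r$), but these are cosmetic.
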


 \begin{proof}  \textit{Step 1.}  We claim that for all $p$ and $r$ we
have:
 \begin{equation*}
  B_1 ( p, C_0^{-1}r)\cup B_2 (p , C_0^{-1}r)
  \subset B     (p,  r)
  \subset   
    B_1 (p , C_0 r)\cup B_2 ( p, C_0 r).
 \end{equation*} 
Indeed, letting $\zeta = z-z'+|x|^\alpha (y'-y)$,  we have
 \begin{align*}
%   \label{babbo} 
  (x',y',z')\in B_1(p,r  ) \quad &\Leftrightarrow \quad
\max \Big\{ |x-x'|,|y-y'|,\Big( \frac{|\zeta  |}
 {|x|^{\alpha -1}}\Big)^{1/2}\Big\}<r,
\\ \label{mamma} 
  (x',y',z')\in B_2(p,r  ) \quad &\Leftrightarrow \quad
\max \Big\{ |x-x'|, |y-y'|,|\zeta |^{1/(\alpha+1)}\Big\} <r.
 \end{align*} 
This means that   $  (x',y',z')\in (B_1\cup B_2)(p,
r)$ if and only if $  \delta((x,y,z), (x',y',z')) < r$.
Then \textit{Step 1} is concluded thanks to Theorem \ref{ddss}. 
The argument also proves the inclusions in the left-hand side of  \eqref{1212}
and \eqref{1313}.

\medskip\noindent\textit{Step 2. } We prove the  inclusion in the right-hand
side of  \eqref{1212}.
Let $|x|>\eta r$ 
and let $(x', y', z')\in B(p,  r)$. By  \textit{Step 1} we have  
$(x',y',z')\in  B_1(p,C_0 r)\cup B_2 (p,C_0 r)$.  
To conclude the proof it suffices to show that there is a constant 
$b_1(\eta )>0$ so that 
the following implication holds: 
\begin{equation}\label{impli} 
 \left\{\begin{aligned}
         &|x|>\eta r   
         \\& \min \Big\{\frac{|\zeta|^{1/2}}{|x|^{(\alpha
-1)/2}},|\zeta|^{1/(\alpha
+1)}  \Big\}
         \leq C_0  r \quad (*)
        \end{aligned}
 \right.
 \quad \Rightarrow \quad \frac{|\zeta|^{1/2}}{|x|^{(\alpha -1)/2}}\leq
b_1(\eta )   r.
\end{equation}

If 
 $\frac{|\zeta |^{1/2}}{|x|^{(\alpha -1)/2}}\leq |\zeta |^{1/(\alpha +1)} $,
there is
nothing to prove and we can   choose $b_1(\eta )=C_0 $. 
In  the case  
$\frac{|\zeta|^{1/2}}{|x|^{(\alpha -1)/2}}\geq |\zeta |^{1/(\alpha +1)} $,  
inequality $(*)$ reads $
 |\zeta|^{1/(\alpha +1)}\leq C_0  r$ 
and we have:
 \begin{equation*}
\begin{aligned}
 \frac{|\zeta |^{1/2}}{|x|^{(\alpha -1)/2}} &\leq  
   \frac{|\zeta |^{1/2}}{\eta^{(\alpha -1)/2}  r^{(\alpha -1)/2}} 
\leq   \frac{ C_0 ^{(\alpha+1)/2}}{\eta^{(\alpha -1)/2}} \,r.
  \end{aligned}
 \end{equation*}
The proof of \textit{Step 2} is concluded, with   $b_1(\eta )
=\max\big\{C_0 , 
 \frac{ C_0^{(\alpha +1)/2}}{\eta^{(\alpha -1)/2}}\big\}$.

\medskip\noindent\textit{Step 3.} We prove the  inclusion in the right-hand
side of \eqref{1313}.
As in the \textit{Step 2}, it suffices to show the implication  
\begin{equation}\label{implicato} 
 \left\{\begin{aligned}
         &r>\eta |x|    
         \\& \min \Big\{\frac{|\zeta |^{1/2}}{|x|^{(\alpha
-1)/2}},|\zeta|^{1/(\alpha
+1)}  \Big\}
         \leq C_0  r \quad  
        \end{aligned}
 \right.
 \quad \Rightarrow \quad  |\zeta|^{1/(\alpha +1)} \leq b_2(\eta )   r.
\end{equation}
If the minimum is $|\zeta |^{1/(\alpha+1)}$, we trivially get the implication
with 
 $b_2(\eta )=C_0 $. Otherwise, we have 
 \begin{equation*}
\begin{aligned}
 C_0 r &
 \geq \min 
 \Big\{\frac{|\zeta |^{1/2}}{|x|^{(\alpha -1)/2}},|\zeta |^{1/(\alpha +1)} 
\Big\}=
 \frac{|\zeta | ^{1/2}}{|x|^{(\alpha -1)/2}}\geq  
|\zeta|^{1/2}\frac{\eta^{(\alpha -1)/2}}{r^{(\alpha -1)/2}},
\end{aligned}
 \end{equation*}
which is equivalent to $|\zeta |^{1/(\alpha +1)}\leq
\Big(\frac{C_0 }{\eta^{(\alpha -1)/2}}\Big)^{2/(\alpha +1)} r$, 
as required.
Therefore, implication \eqref{implicato} holds with
$b_2(\eta )
=\max\big\{C_0   ,
\big(\frac{C_0 }{\eta^{(\alpha -1)/2}}\big)^{2/(\alpha +1)}\big\}$.
\end{proof}

Using the previous corollary, it is immediate to get the following estimates of the Lebesgue measure of the
balls $B(p,r)$.

\begin{corollary}
\label{boxxo2} 
Let  $\eta >0$. For all $p=(x,y,z) \in\R^3$ and $r\in\left]0,+\infty\right[$ we have:
 \begin{itemize}
  \item[i)]  if $|x|\geq \eta r $ then  $\mathcal L^3  (B(p,r)) \simeq 
 r^{4}   |x|^{\alpha-1}$;
  
  \item[ii)] if $|x|\leq \eta r $ then $\mathcal L^3  (B(p,r)) \simeq  r
^{\alpha+3}$.
 \end{itemize}
The equivalence constants depend on $\alpha $ and $\eta$.
\end{corollary}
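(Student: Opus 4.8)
The plan is to reduce everything to the elementary Lebesgue volume of the coordinate boxes $B_1(p,r)$ and $B_2(p,r)$ defined just before Corollary~\ref{boxxo}, and then to quote that corollary. First I would observe that for fixed $p=(x,y,z)$ the map $u\mapsto\Phi_1(p;u)$ is affine, with linear part
\[
A_1(x)=\begin{pmatrix} 1 & 0 & 0\\ 0 & 1 & 0\\ 0 & |x|^{\alpha} & |x|^{\alpha-1}\end{pmatrix},
\qquad \det A_1(x)=|x|^{\alpha-1},
\]
while $\{u:\|u\|_{1,1,2}<r\}$ is the Euclidean box $(-r,r)^2\times(-r^2,r^2)$ of measure $8r^4$; hence, by the change of variables formula for affine maps, $\mathcal L^3(B_1(p,r))=8\,r^4|x|^{\alpha-1}$ whenever $x\ne 0$. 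In the same way $u\mapsto\Phi_2(p;u)$ is affine with linear part of determinant $1$, and $\{u:\|u\|_{1,1,\alpha+1}<r\}=(-r,r)^2\times(-r^{\alpha+1},r^{\alpha+1})$ has measure $8r^{\alpha+3}$, so $\mathcal L^3(B_2(p,r))=8\,r^{\alpha+3}$ for every $p$ and $r$.

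Next I would feed these two identities into Corollary~\ref{boxxo}. In case~(i) the hypothesis $|x|\ge\eta r>0$ in particular excludes $x=0$, so by \eqref{1212} one gets
\[
8\,C_0^{-4}r^4|x|^{\alpha-1}=\mathcal L^3\bigl(B_1(p,C_0^{-1}r)\bigr)\le\mathcal L^3\bigl(B(p,r)\bigr)\le\mathcal L^3\bigl(B_1(p,b_1(\eta)r)\bigr)=8\,b_1(\eta)^4 r^4|x|^{\alpha-1},
\]
which is the asserted equivalence $\mathcal L^3(B(p,r))\simeq r^4|x|^{\alpha-1}$ with constants depending only on $\alpha$ and $\eta$ (recall that $C_0$ depends on $\alpha$). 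In case~(ii) the hypothesis $|x|\le\eta r$ is the same as $r\ge\eta^{-1}|x|$, so applying \eqref{1313} with $\eta$ replaced by $\eta^{-1}$ yields
\[
8\,C_0^{-(\alpha+3)}r^{\alpha+3}=\mathcal L^3\bigl(B_2(p,C_0^{-1}r)\bigr)\le\mathcal L^3\bigl(B(p,r)\bigr)\le\mathcal L^3\bigl(B_2(p,b_2(\eta^{-1})r)\bigr)=8\,b_2(\eta^{-1})^{\alpha+3}r^{\alpha+3},
\]
and hence $\mathcal L^3(B(p,r))\simeq r^{\alpha+3}$, as claimed.

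There is essentially no obstacle here, the statement being a direct corollary of the ball--box estimate already proved. The only points that deserve (minor) care are the computation $\det A_1(x)=|x|^{\alpha-1}$, which is exactly where the weight in the volume comes from and which is responsible for the case split according to whether $|x|$ is large or small compared with $r$; the harmless cosmetic replacement $\eta\leftrightarrow\eta^{-1}$ needed to match the formulation of part~(ii) of Corollary~\ref{boxxo}; and the observation that the degeneracy of $\Phi_1(p;\cdot)$ at $x=0$ never occurs under the hypothesis of case~(i), so that the formula $\mathcal L^3(B_1(p,r))=8\,r^4|x|^{\alpha-1}$ may be used there.
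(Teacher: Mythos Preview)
Your proof is correct and matches exactly what the paper has in mind: the authors omit the argument as ``trivially based on Corollary~\ref{boxxo}'', and you have simply spelled out the explicit box volumes via the affine maps $\Phi_1,\Phi_2$ and applied the inclusions \eqref{1212}--\eqref{1313}. There is nothing to add.
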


We omit the proof, which is trivially based on Corollary~\ref{boxxo}.

\section{Ahlfors' property} \label{AL}

The boundary of the half-space $\R^3_+$ is the plane $\Sigma =\R^2 = \{ (x,y,z)
\in\R^3: z=0\}$. According to the general construction of \cite{GarofaloNhieu96} and
\cite{MontiSerraCassano}, the vector-fields $X_1,X_2$ induce on $\Sigma$ a Borel
measure known as $X$-perimeter measure. We denote this measure by $\mu$.
The integral-geometric formula for this measure is the
following:
\[
 \mu(B) = \int _{B} \sqrt{\langle X_1,N\rangle^2 + \langle X_2, N\rangle^2 }
dxdy,\qquad B\subset \R^2 \textrm{ Borel set}.
\]
Above, $N = (0,0,-1)$ is the exterior normal to the boundary of  $\R^3_+$
 and $\langle
\cdot,
\cdot\rangle$ denotes the standard scalar product of $\R^3$. 
In fact, the measure $\mu$ is simply
\begin{equation} \label{mix}
 \mu  = |x|^\alpha \mathcal L^2 \quad \textrm{ on $\Sigma = \R^2$}.
\end{equation}
The metric $d$ and the balls $B(p,r)$ can be restricted to $\Sigma$.
With abuse of notation we let $\mu(B(p,r)) = \mu (B(p,r))\cap \Sigma)$.
The measure $\mu$ is Ahlfors regular in the following sense (see \cite{DanielliGarofaloNhieu06}).

\begin{proposition} \label{1:prop} 
There is a constant $C_\a>0$  such that for any $p \in \Sigma$ and for all $r>0$ we have
\begin{equation}  \label{callo}
C_\a^{-1}\frac{\mathcal{L}^3
(B(p,r))}{r} \leq  \mu (B(p , r) ) \leq C_\a \frac{\mathcal{L}^3
(B(p,r))}{r} .
\end{equation} 
% The equivalence constants depend on $\alpha$.
\end{proposition}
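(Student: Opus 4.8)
The plan is to reduce the inequality \eqref{callo} to the explicit shape of the metric balls established in Section~\ref{ullo} together with the volume bounds of Corollary~\ref{boxxo2}, by an elementary computation on the plane $\Sigma$. First I would restrict everything to $\Sigma$: since $p\in\Sigma$ we may write $p=(x,y,0)$, and by \eqref{piana} there is a constant $C\geq 1$ depending only on $\alpha$ such that, putting
\[
D(x',y'):=|x-x'|+|y-y'|+|x|^{1/2}|y-y'|^{1/2},\qquad R(s):=\{(x',y')\in\R^2:D(x',y')<s\},
\]
one has $R(C^{-1}r)\subseteq B(p,r)\cap\Sigma\subseteq R(Cr)$, and hence $\mu(R(C^{-1}r))\leq \mu(B(p,r))\leq \mu(R(Cr))$. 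So it is enough to estimate $\mu(R(s))=\int_{R(s)}|x'|^{\alpha}\,dx'\,dy'$ (recall $\mu=|x|^\alpha\mathcal L^2$ on $\Sigma$ by \eqref{mix}) for a generic $s>0$, and then take $s\simeq r$.

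Next I would observe that, up to a universal multiplicative constant on $s$, the set $R(s)$ is the rectangle $\{|x-x'|<s\}\times\{|y-y'|<\min\{s,\,s^2/|x|\}\}$: indeed $D<s$ forces each of the three summands to be $<s$, which yields both $|y-y'|<s$ and $|y-y'|<s^2/|x|$, while conversely those two bounds together with $|x-x'|<s$ give $D<3s$. Therefore $\mu(R(s))\simeq\bigl(\int_{|x'-x|<s}|x'|^{\alpha}\,dx'\bigr)\,\min\{s,\,s^2/|x|\}$, and a one-dimensional estimate gives $\int_{|x'-x|<s}|x'|^{\alpha}\,dx'\simeq s\,|x|^{\alpha}$ if $|x|\geq s$ and $\simeq s^{\alpha+1}$ if $|x|\leq s$ — for the lower bound in the second regime one keeps only the half of the interval $(x-s,x+s)$ lying at distance $\gtrsim s$ from the origin. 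Multiplying the two factors,
\[
\mu(R(s))\;\simeq\;
\begin{cases}
 s^{3}\,|x|^{\alpha-1}, & |x|\geq s,\\[2pt]
 s^{\alpha+2}, & |x|\leq s.
\end{cases}
\]

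Finally I would plug $s=C^{-1}r$ and $s=Cr$ into this formula and split into the cases $|x|\geq r$ and $|x|\leq r$, using that when $|x|\simeq r$ the two branches $s^{3}|x|^{\alpha-1}$ and $s^{\alpha+2}$ are themselves comparable, so rescaling $s$ by the fixed factor $C^{\pm1}$ is harmless. This yields $\mu(B(p,r))\simeq r^{3}|x|^{\alpha-1}$ for $|x|\geq r$ and $\mu(B(p,r))\simeq r^{\alpha+2}$ for $|x|\leq r$, which by Corollary~\ref{boxxo2} (applied with $\eta=1$) is exactly $\mathcal L^3(B(p,r))/r$; this is \eqref{callo}. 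The only point needing care is the constant bookkeeping across the transition $|x|\simeq r$ — one must check that the constant-factor rescalings of $s$ and the gluing of the two regimes do not accumulate — but since every quantity in sight is a fixed power of the relevant scale, this is routine rather than a genuine obstacle.
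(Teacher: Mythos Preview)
Your proof is correct and follows essentially the same route as the paper: describe $B(p,r)\cap\Sigma$ as a rectangle of width $\simeq r$ in $x'$ and height $\simeq\min\{r,r^2/|x|\}$ in $y'$, integrate $|x'|^\alpha$ over it, and compare with Corollary~\ref{boxxo2}. The only cosmetic difference is that the paper obtains the rectangle by intersecting the boxes $B_1(p,r)$, $B_2(p,r)$ of Corollary~\ref{boxxo} with $\{z=0\}$, whereas you use the planar distance formula~\eqref{piana} directly; the two descriptions are equivalent and the subsequent computation is identical.
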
 

\begin{proof} Let $p= (\bar x,\bar y,0)$ and $r>0$ be such that $|\bar x|\geq
r$. The  section of the
ball $B_1(p,r)$ with the plane $\Sigma$ is 
\[
\begin{split}
 & B_1(p,r) \cap \Sigma 
\\& = \Big\{  \big(\bar x+u_1, \bar y+u_2 , 
|\bar x|^\alpha
u_2 + |\bar x|^{\alpha -1 }u_3) \in\R^3 : 
\| u\|_{1,1,2}<r, |\bar x|^\alpha
u_2 + |\bar x|^{\alpha -1 }u_3 = 0 \Big\}
\\ 
&
=\big[\bar x-r,\bar x+r\big] \times \big[\bar y - \min \{ r, r^2/|\bar x|\} ,
\bar y + \min \{ r, r^2/|\bar x|\}\big] .
\end{split} 
\]
Then, from  
\[
 \mu(B_1(p,r)) = 2\min \{ r, r^2/|\bar x|\} \int _{\bar x-r} ^{\bar x+r}
|x|^\alpha dx
\]
and from Corollary \ref{boxxo} we deduce that when $|\bar x|\geq  r$ we   have
\[
 \mu(B(p,r)) \simeq \mu(B_1(p,r)) =\frac{2r^2}{|\bar x|} \int _{\bar x-r} ^{\bar
x+r}
|x|^\alpha dx \simeq {r^3}{|\bar x|^{\alpha-1} }.
\]

On the other hand, the section of the
ball $B_2(p,r)$ with the plane $\Sigma$ is
\[
\begin{split}
 B_2(p,r) \cap \Sigma & 
= \Big\{  \big(\bar x+u_1, \bar y+u_2 , |\bar x|^\alpha
u_2 +  u_3) \in\R^3 : 
\| u\|_{1,1,\alpha+1 }<r, |\bar x|^\alpha
u_2 +  u_3 = 0 \Big\}
\\ 
&
=\big[\bar x-r,\bar x+r\big] \times \big[\bar y - \min \{ r, r^{\alpha+1} /|\bar
x|^\alpha \} ,
\bar y + \min \{ r, r^{\alpha+1} /|\bar x|^\alpha \}\big] ,
\end{split} 
\]
and thus  
\[
 \mu(B_2 (p,r)) = 2\min \{ r, r^{\alpha+1} /|\bar x|^\alpha \}\int _{\bar x-r}
^{\bar x+r}
|x|^\alpha dx.
\]
When $|\bar x|\leq  r$, from Corollary \ref{boxxo}  we deduce that 
\[
 \mu(B(p,r)) \simeq \mu(B_2(p,r)) =2r  \int _{\bar x-r} ^{\bar
x+r}
|x|^\alpha dx \simeq {r^ {\alpha+1} }.
\]
Now the claim \eqref{callo} is a consequence of Corollary \ref{boxxo2}.
  \end{proof}

 \section{Schema of the proof}

 In this section, we  outline the scheme of the proof of Theorem \ref{tello}.
We have the points  $u=(x,y,0)$, $v=(x',y',0)$ and their distance   $d =
d(u,v)$. Since $B(u,d) \subset B(v,2d)\subset B(u, 3d)$, the integral kernel 
\[
 \frac{|f(u,0) - f(v,0)|^p  }{d(u,v)^{ps} \mu(B(u,d(u,v)))}
\]
is
``almost'' symmetric in $u$ and $v$ and we can assume that 
\begin{subequations}\label{gips} 
\begin{align}  \label{yy'}
& y'\geq y\quad \text { and }
\\& 0<x'<\infty. \label{xx} 
\end{align}
\end{subequations}
Assumption \eqref{xx} can be made without loss of generality in view of  
the invariance property \eqref{cetra}.
 
We will connect the points $u$ and $v$  by a number of integral curves of the
vector fields $\pm X_1$,  $\pm X_2$, or of their sum $\pm( X_1+X_2 )$. The
correct choice depends on the following cases.

Let $ \e_0\in\left]0,1\right[$ be a  small parameter that will be fixed along
Section~\ref{pop}. We have the following cases: 

1) $d\geq \e_0|x|     $ and  $d\geq \e_0|x'| $. We call this the
\emph{characteristic  case}.

2) $d<\e_0|x|   $ and  $d<\e_0|x'| $. We call this the
\emph{noncharacteristic  case}.

3) $d< \e_0|x| $  and $d\geq \e_0|x'|    $, or viceversa.

\noindent 
In the third case, we have   $|x'| \geq |x| - |x-x'| \geq \e_0^{-1} d - d = ( \e_0^{-1}-1)
d$
because  $|x-x'| \leq d$. So this case is essentially contained in the
second one.

Theorem \ref{tello} is then reduced to the proof of \eqref{equa} when the
integration domain $\R^2\times\R^2$ is replaced by the case 1) and 2),
separately, along with the two conditions  \eqref{gips}. 
The proof for the characteristic case is  in Section  \ref{pip}.
The proof for the noncharacteristic case is in Section   \ref{pop}.

\section{Characteristic case}
\label{pip}

%\subsection{Sequence of paths} 

We connect the points $u=(x,y,0)$ and $v=(x',y',0)$ with integral
curves of the vector fields $\pm X_1$ and $\pm X_2$. Our first task is to fix
the order in the sequence of these vector fields.
We start by discussing a  first subcase of \eqref{xx}. Namely, we assume that   
\begin{equation}\label{nolog} 
  0< |x|\leq x' .
\end{equation} 
We will explain in Remark \ref{remo}  (see page  \pageref{remo}) 
how to deal with the second sub-case case $|x| >
 x' >0$. In the following, for $u,v\in\R^2$ we let $d= d(u,v)$. We define the
set
\[
 A = \big\{ (u,v) \in\R^2\times\R^2 :y\leq y', \, 0< |x| \leq x' \leq
d/\e_0\big\}.
\]

Starting from $u$, we introduce certain intermediate points interpolating $u $
and $v$. Let $\t =\tau(u,v)>0$ be the number 
\begin{equation}\label{ttaa} 
 \t = \frac{|2x|^\a}{ (2x')^\a - |x|^\a }\;(y'-y).
\end{equation} 
By
\eqref{nolog} we have  
$\t\leq \frac{2^\a}{2^\a -1}(y'-y)$ and in particular $\t\leq C_\a  d $. 
Then  we define the points:
\begin{equation} \label{u_2}
\begin{split}
  u_0& =u=(x,y,0),
 \\
  u_1 & = \exp\big( (y'-y) X_2)(u_0) =\big( x,y',  |x|^\alpha (y'-y) \big),
\\
  u_2 & = \exp\Big(\Big(\frac{|x|}{2}-x\Big) X_1\Big)(u_1)
=\Big(\frac{|x|}{2} , y',  |x|^\alpha (y'-
  y) \Big),
\\  u_3  &  = \exp (\tau X_2 ) (u_2)  =\Big( \frac{|x|}{2}, y'+\tau ,
|x|^\a\Big(y'-y+\frac{\t}{2^\a}\Big)  \Big),
\\ u_4 &= \exp\Big(\Big(x'-\frac{|x| }{2}\Big)X_1\Big) (u_3) = \Big(x', y'+\t, 
 |x|^\a\Big(y'-y+\frac{\t}{2^\a}\Big)  \Big),
\\
u_5 & =\exp(-\tau X_2) (u_4) =  \Big(x', y' ,   
|x|^\a\Big(y'-y+\frac{\t}{2^\a} 
\Big) - x'^\a\tau \Big) =(x',y',0) = v.
\end{split}
\end{equation} 
The last identity is due to \eqref{ttaa}.
Let $\gamma_j:[0,T_j] \to  \R ^3_+$ be the   integral curve connecting
 $u_{j-1}$ and $u_{j}$, 
where $T_j>0 $ are such that $\gamma_j(T_j) = u_j$.
The support of $\gamma_j$ is contained in $ \R ^3_+$ for all
$j=1,\ldots,5$.

If $(u,v) \in A$, by  \eqref{piana}  we have  $d(u, v)\simeq
|x-x'|+|y-y'|$. Furthermore, Proposition~\ref{1:prop} and Corollary~\ref{boxxo2}
give $\mu(B(u, d ))\simeq d^{\a+2}$ and so we have  
\begin{equation}\label{kernel} 
  d^{ps}\mu(B(u, d))=  d^{p-1}\mu(B(u, d))\simeq d^{\a+p+1}.        
\end{equation} 
Finally, by \eqref{mix}  we have $d\mu(u)= |x|^\a dx dy$
and $d\mu(v)= |x'|^\a dx' dy'$.

Using these estimates and starting from the inequality
\[
 |f(u) -f(v)| \leq \sum_{j=1}^ 5 |f(u_j) - f(u_{j-1})|,
\]
we obtain
\[
 \int_{A} \frac{|f(u) - f(v)|^p  }{d ^{ps} \mu(B(u,d ))}
\, d \mu(u) \, d \mu(v) \leq C_ {\a, p}  \sum_{j=1}^ 5 I_j,
\]
where
\begin{equation} \label{NU}
 I_j:= \int_A 
\frac{|x|^\a x'^\a }{d^{\a+p+1}} 
\Big(
\int_0^{T_j} |\nablalfa f(\gamma_j(t))| dt \Big)^pdxdydx'dy',\quad j=1,\ldots,5.
\end{equation}

We claim that for all  $j=1,\ldots,5$ we have 
\begin{equation} \label{cillo}
 I_j \leq C_{\alpha,p} \int _{ {\R} ^{3}_+} |\nablalfa f(x,y,z)|^ p dxdydz.
\end{equation}

\step{Estimate  of $I_1$.} The curve connecting $u_0$ and $u_1$ is $\gamma_1(t)=
\mathrm{e}^{tX_2}(x,y,0)$ with $t\in[0,y'-y]$. The corresponding integral is
\begin{equation*}
\begin{aligned}
& I_1 = \int_A 
\frac{|x|^\a x'^\a }{d^{\a+p+1}} 
\Big(
\int_0^{y'-y} |X f(x,y+t, |x|^\a t )| dt \Big)^pdxdydx'dy'.
%  \\ &
%   I_0:= \int \frac{|x|^\a x'^\a dxdydx'dy'}{d^{\a+p+1}} \Big( \Big)^p
%  \\ &
%   I_0:= \int \frac{|x|^\a x'^\a dxdydx'dy'}{d^{\a+p+1}} \Big( \Big)^p
\end{aligned}
\end{equation*}
Using $x' \leq d/\e_0$ and $0\leq y'-y\leq d$ we obtain  
\[
\begin{split} 
 I_{1 } & 
\leq C_{\alpha }  \int_{\R^2\times\R^2 }\frac{|x|^\alpha }{d^{p+1} } \Big(
\int_0^d
|Xf(x,y+t, |x|^\a t )| dt \Big)^p   dxdy \, dx'dy'
\\
&
\leq C_{\alpha } \int_{\R^2 } \int_0^\infty \int_{\{ d=r \} } \frac{|x|^\alpha 
}{d^{p+1} }
\Big(
\int_0^d
|X f(x,y+t, |x|^\a t)| dt \Big)^p  d\mathcal H^{1}(x',y') \, dr\, 
dxdy 
\\
&
\leq C_{\alpha }  \int_0^\infty  \int_{\R^2 } \Big(  \int_ 0^r
|X  f( x,y+t , |x|^\a t) | dt \Big)^p     |x|^\alpha  
dxdy \, \frac{dr}{r^p}.
\end{split}
\]
We used the coarea formula with $|\nabla d|\simeq 1$.
 Now, by   the Minkowski inequality we obtain 
\[
  I_{1} 
\leq C_{\alpha } \int_0^\infty   \Big(  \frac{1}{r } 
 \int_0^r
 \Big[ \int_{\R^2}  |X f(x, y+t, |x|^\a t) |^p |x|^\a dxdy 
 \Big]^{1/p} dt 
 \Big)^p  dr, 
\]
and  after the change of variable $y\mapsto y+t=\eta$ we can use the  Hardy
inequality to get  
\[ 
  I_{1} 
\leq C_{\alpha,p} \int_0^\infty  \int_{\R^2 } 
|X f(x,\eta , |x|^\a r)|^ p      |x|^\alpha 
dxd\eta  \, dr  
\leq C_{\alpha,p}    
\int_{\R^3_+ } 
|X f(x,y,z )|^ p     
dxdydz, 
\]
by the change of variable $r\mapsto z:= |x|^\a r$.
This proves \eqref{cillo} for $j=1$.

\step{Estimate of  $I_2$.} We connect the points $u_1$ and $u_2 $ using the
integral curve of $X_1$, i.e., the curve  $\gamma_2(t)= 
(x- t\operatorname{sgn}(x),y', |x|^\a (y'-y)) $ with $t\in[0, |x|-x/2]$.
The corresponding integral is
\begin{equation*}
\begin{aligned}
 I_2 &= \int   _A
\frac{|x|^\a x'^\a}{d^{\a+p+1}} 
\Big|
\int_0^{|x| -x/2}  |X  f(x- t\operatorname{sgn}(x),y', |x|^\a (y'-y) )| dt
\Big|^p  dxdydx'dy'
\\ &
\leq C_{\alpha }  \int_{A}\frac{1}{d^{p+1} } \Big( \int_0^{ C d}
|X f(x- t\operatorname{sgn}(x),y', |x|^\a (y'-y))| dt \Big)^p |x|^\alpha   \,
dxdy  dx'dy',
\end{aligned}
\end{equation*}
where   $d  \simeq \max\{  |x-x'|, |y-y'|\}$. We used $x' \leq d/\e_0$  and 
$|x|-x/2\leq C d$. We perform  the  change of
variable in time $ x- t\operatorname{sgn  }x = s$ with  $|s| \leq |x|+|t| \leq C
d$. 
Then, we pass from the variables $(x', y)$ to the variables $\zeta=(\xi,\eta) 
= (x'-x,y'-y)$ with $d\zeta  = dx'  dy$ and $|\z| \simeq d$.
Finally, we use the Minkowski inequality to interchange integration in $ds$ and
$dxdy'$ and we obtain the estimate
\[
 I_{2} \leq C_{\alpha}  \int_{\R^2_+}
%\{\xi\in\R, \eta> 0\}  
\Big(
\int_{|s|\leq C |\zeta| }  \Big[  \int_{A_\z} 
|X_1f(s, y', \eta |x|^\alpha    ))|^ p  |x|^\alpha   \, dx  
 dy' \Big] ^{1/p}  ds \Big)^p   \frac{d\zeta  }{|\zeta| ^{p+1}},
\]
where we let $A_\z = \{ (x,y')
\in\R^2:|x| \leq C|\zeta| \}$. 
By symmetry in the  variable $x$, it suffices to estimate the last integral when
$x>0$.

We perform the change of variable $x\mapsto z =\eta  x ^\alpha  $ with $dz \simeq
\eta   x ^{\alpha-1} dx$, that is equivalent to
\begin{equation} \label{for}
  x ^ \alpha dx \simeq \frac{z^{1/\alpha} }{\eta ^{(\alpha+1)/\alpha}} dz. 
\end{equation}
In order to apply the coarea formula in the $\zeta$ variable for fixed $z$ and
$y'$, we need the following  estimate.

\begin{lemma}
There exists a constant $C>0$ such that for any $r>0$ and   $z>0$   we  have
\begin{equation}
 \label{colla}
  J_{r}(z):=  \int _{D_{r}(z) }
     \frac{ z ^{1/\alpha}  }{\eta ^  {(\alpha+1) /\alpha} } d\mathcal
H^1(\zeta)\leq r C,
\end{equation}
where $D_{r}(z)  = \{ \zeta =(\xi,\eta) \in \R\times\R^+: |\zeta |  = r,\; 0<  z \leq  \eta 
|\zeta |^\alpha   \}$.  
\end{lemma}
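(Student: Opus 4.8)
The goal is the bound $J_r(z) \le Cr$ on the curve $D_r(z)$, which is the arc of the circle $\{|\zeta| = r\}$ lying in the half-plane $\eta > 0$ with the additional constraint $\eta |\zeta|^\alpha \ge z$, i.e. $\eta \ge z r^{-\alpha}$. The plan is to parametrize this arc explicitly and estimate the resulting one-dimensional integral directly. Set $\xi = r\cos\theta$, $\eta = r\sin\theta$ with $\theta$ ranging over those values in $[0,\pi]$ for which $r\sin\theta \ge z r^{-\alpha}$; the arclength element is $d\mathcal H^1(\zeta) = r\,d\theta$. With $\eta = r\sin\theta$ the integrand becomes $z^{1/\alpha} (r\sin\theta)^{-(\alpha+1)/\alpha}$, so
\[
J_r(z) = \int_{\{\theta:\, r\sin\theta \ge z r^{-\alpha}\}} \frac{z^{1/\alpha}}{(r\sin\theta)^{(\alpha+1)/\alpha}}\, r\, d\theta = z^{1/\alpha} r^{-1/\alpha} \int_{\{\sin\theta \ge z r^{-\alpha-1}\}} \frac{d\theta}{(\sin\theta)^{(\alpha+1)/\alpha}}.
\]
By symmetry of $\sin\theta$ about $\theta = \pi/2$ it suffices to bound the contribution from $\theta \in (0,\pi/2]$ and double it.

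\textbf{Key step: the $\theta$-integral.} Write $\beta := (\alpha+1)/\alpha \in (1,2]$ and let $a := z r^{-\alpha-1}$; note the constraint set is nonempty only if $a \le 1$. On $(0,\pi/2]$ we have $\sin\theta \simeq \theta$, so up to a constant depending only on $\alpha$,
\[
\int_{\{\theta \in (0,\pi/2]:\, \sin\theta \ge a\}} \frac{d\theta}{(\sin\theta)^\beta} \le C_\alpha \int_{\{\theta \in (0,\pi/2]:\, \theta \ge c a\}} \frac{d\theta}{\theta^\beta} \le C_\alpha \int_{ca}^{\infty} \frac{d\theta}{\theta^\beta} = C_\alpha \frac{(ca)^{1-\beta}}{\beta-1} = C_\alpha' a^{1-\beta},
\]
using $\beta > 1$ so the integral converges at infinity. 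Now $1 - \beta = 1 - (\alpha+1)/\alpha = -1/\alpha$, hence $a^{1-\beta} = a^{-1/\alpha} = (z r^{-\alpha-1})^{-1/\alpha} = z^{-1/\alpha} r^{(\alpha+1)/\alpha}$. Substituting back,
\[
J_r(z) \le C_\alpha\, z^{1/\alpha} r^{-1/\alpha} \cdot z^{-1/\alpha} r^{(\alpha+1)/\alpha} = C_\alpha\, r^{-1/\alpha + (\alpha+1)/\alpha} = C_\alpha\, r,
\]
since $-1/\alpha + (\alpha+1)/\alpha = \alpha/\alpha = 1$. This is exactly the claimed estimate \eqref{colla}.

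\textbf{Remarks on where care is needed.} The only real subtlety is bookkeeping near $\theta = 0$ (and $\theta = \pi$): the integrand $(\sin\theta)^{-\beta}$ is not integrable there, but the constraint $\sin\theta \ge a$ cuts off precisely the divergent part, and the exponent $\beta = (\alpha+1)/\alpha$ is rigged so that the resulting power of $a$ cancels the prefactor $z^{1/\alpha} r^{-1/\alpha}$ and leaves a clean $r$. One should also record the degenerate cases: if $z > r^{\alpha+1}$ the set $D_r(z)$ is empty and $J_r(z) = 0 \le Cr$ trivially; and one may take $C$ independent of $z$ and $r$ (depending only on $\alpha$), which is what the application in the estimate of $I_2$ requires. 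No other difficulty arises; the estimate $\sin\theta \simeq \theta$ on $(0,\pi/2]$ is elementary, and the power arithmetic $-1/\alpha + (\alpha+1)/\alpha = 1$ is the heart of the matter.
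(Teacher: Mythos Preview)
Your proof is correct, but it proceeds under a different reading of the norm $|\zeta|$ than the paper's. In the paper's own proof the authors explicitly adopt the max norm $|\zeta|=\max\{|\xi|,|\eta|\}$, so that the level set $\{|\zeta|=r\}$ is the boundary of a square; they then integrate the kernel over the top edge $\eta=r$, $\xi\in[-r,r]$ and the two vertical segments $\xi=\pm r$, $\eta\in[z/r^{\alpha},r]$, obtaining directly
\[
J_r(z)\;\simeq\; z^{1/\alpha}r^{-1/\alpha}+r\;\le\;Cr.
\]
You instead treat $|\zeta|$ as the Euclidean norm, parametrize the circular arc in polar coordinates, and reduce to the power integral $\int_{\theta\ge a}\theta^{-\beta}\,d\theta$ via $\sin\theta\simeq\theta$. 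Both choices are legitimate: the lemma is an estimate up to constants and the coarea decomposition used in the application works with either norm (each has $|\nabla|\zeta||\equiv 1$ a.e.). The paper's square-boundary computation is marginally more direct since it avoids the $\sin\theta\simeq\theta$ comparison, while your polar-coordinate argument makes the exponent bookkeeping $-1/\alpha+(\alpha+1)/\alpha=1$ very transparent. In either route the essential mechanism is the same: the constraint $\eta\ge z/r^{\alpha}$ truncates the non-integrable singularity of $\eta^{-(\alpha+1)/\alpha}$ at exactly the scale that cancels the prefactor $z^{1/\alpha}$.
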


\begin{proof} We use  the max-definition $|\zeta|=\max\{|\xi|,
|\eta |\}$.
The estimate is obvious when $D_{r}(z) =\emptyset$. Assume this
is not the case, i.e.,~$0<z<r^{\a+1}$. 
Then by direct calculation 
\begin{equation*}
\begin{aligned}
 J_{r}(z)& = 2\int_{z/r^\a}^r \frac{ z ^{1/\alpha}  }{\eta ^  {(\alpha+1) /\alpha} } d\eta +
 \int_{-r}^r \frac{ z ^{1/\alpha}  }{\eta ^  {(\alpha+1) /\alpha} }  d\xi 
\simeq (  z^{1/\a}r^{-1/\a}+r)\leq C r. 
\end{aligned}
\end{equation*}
 \end{proof}

We  finish  the estimate for $I_{2}$ in the following way.
Let  $E_{r} = \{(y',z) \in\R^2_+: 0<z \leq \eta  r^\alpha \}$.
Using \eqref{for},
the coarea formula, the Minkowski inequality and \eqref{colla}:
\[
\begin{split} 
 I_{2} & \leq C_{\alpha}  \int_{0}^\infty \frac{1}{r ^{p+1} }
\int_{|\zeta|=r}
  \Big(  
\int_{|s|\leq C r }  \Big(  \int_{ E_r  } 
|X f(s, y', z  ))|^ p    \frac{ z ^{1/\alpha}  }{\eta ^  {(\alpha+1) /\alpha} }
dz   
 dy' \Big) ^{1/p}  ds \Big)^p  d\mathcal H^1 (\zeta) dr  
\\ 
& \leq C_{\alpha} 
  \int_0^{\infty } \frac{dr}{r^{p+1}} \Big(
\int_{|s|\leq C r} \Big[   \int_{\R^2_+ } 
\int_{D_r(z)}
|X f(s,y',z)|^p \frac{z^{1/\a}}{\eta^{(\a+1)/\a}}
d\mathcal{H}^1(\z) dy'dz
\Big]^{1/p}ds \Big)^p
\\
&
\leq C_{\alpha}  \int_{0}^\infty \frac{1}{ r^{p+1} } \Big( 
\int_{|s|\leq C r }
  \Big(     \int_{\R^2_+ } 
|X f(s, y',sz  ))|^ p   J_r(z) 
dz   
 dy'  \Big) ^{1/p}  ds \Big)^p   dr  
\\ 
&
\leq C_{\alpha} \int_{0}^\infty   \Big( 
\int_{|s|\leq C r }
  \Big(     \int_{\R^2_+ } 
|X f(s, y', z  ))|^ p   
dz   
 dy'  \Big) ^{1/p}  ds \Big)^p   \frac{dr}{r^p}  
\\ 
&
\leq C_{\alpha,p}      \int_{\R^3_+ } 
|X f(x, y , z  ))|^ p   
 dxdydz     .
\end{split} 
\]
In the last line we used again the    Hardy
inequality.
This proves \eqref{cillo} when $j=2$.

\step{Estimate  of $I_3$.} Let $\gamma_3(t)=
\mathrm{e}^{tX_2}\Big(\frac{|x|}{2}, y' ,
|x|^\a (y'-y)  \Big) $ be the integral curve of $X_2$ connecting $u_2$ and
$u_3$, with  $0\leq t\leq \t$. Recall that the number $\t$ in \eqref{ttaa}
satisfies $\t \leq Cd$.
Also using $0<x'\leq   d/\e_0 $, we obtain
\begin{equation*}
\begin{aligned}
 I_3
   \leq C_\alpha \int _A  \frac{|x|^\a }{d^{ p+1}}
 \Big(\int_0^{Cd} \Big|X  f\Big(\frac{|x|}{2}, y'+t,
|x|^\a\Big(y'-y+\frac{t}{2^\a} \Big) 
\Big)\Big| dt
 \Big)^p dxdydx'dy' .
\end{aligned}\end{equation*}
We perform the change of variable 
$x'\mapsto \xi= x'-x$ and $y\mapsto\eta=y'-y$, so that 
$|\zeta|=|(\xi,\eta)|\simeq d$, and then the change of variable in time
$t\mapsto s=\eta+\frac{t}{2^\a}$, so that $0\leq s\leq C|\z|$.
We get
\begin{equation*}
\begin{aligned}
I_3 &  \leq C_{\alpha} 
 \int_{\R^2}  \int_{\R^2}  
 \Big(\int_0^{C|\z|} \Big|X  f\Big(\frac{|x|}{2}, y'+t,
|x|^\a\Big(\eta+\frac{t}{2^\a} \Big) \Big)\Big| dt
 \Big)^p  |x|^\a  dxdy' \frac{d\z}{|\z|^{p+1}}   
\\&  \leq C_\alpha 
 \int_0^\infty
   \int_{|\z|=r}
   \int_{\R^2}\Big(\int_0^{C|\z|} \big|
Xf (\cdots)
   \big| ds
 \Big)^p
|x|^\a dxdy'
 d\mathcal{H}^1(\z)  
\frac{dr}{r^{p+1}} ,
\end{aligned}\end{equation*}
where
\[
Xf (\cdots) =  X  f\Big(\frac{|x|}{2},
y'+2^\a(s-\eta), 
   |x|^\a s     \Big).
\]

 Next we apply the Minkowski inequality and, after that, we change variable from
$y'$ to $w=y'+2^\a(s-\eta)$. We obtain  
\begin{equation*} 
\begin{aligned}
I_3&  
   \leq C_\alpha 
   \int_0^\infty
   \int_{|\z|=r}
   \Big(\int_0^{Cr }
\Big[ \int _{\R^2}   
\Big|X  f\Big(\frac{|x|}{2}, w, |x|^\a s \Big)\Big|^p
  |x|^\a dxdw \Big]^{\frac 1p}ds \Big)^p
 d\mathcal{H}^1(\z) 
\frac{dr}{r^{p+1}} 
\\&
   \leq C_\alpha 
   \int_0^\infty
   \Big(\int_0^{Cr }
\Big[ \int  _{\R^2}  
\Big|X  f\Big(\frac{|x|}{2}, w, |x|^\a s \Big)\Big|^p
  |x|^\a dxdw \Big]^{\frac 1p}ds \Big)^p \frac{dr}{r^{p }} 
\\&
%  \lesssim  
%    \int_0^\infty\frac{dr}{r^{p }} 
%    \Big(\int_0^{Cr }
% ds \Big[ \int   
% \Big|X_2 f\Big(\frac{|x|}{2}, w, |x|^\a s \Big)\Big|^p
%   |x|^\a dxdw \Big]^{\frac 1p}\Big)^p
% \\&
   \leq C_{\alpha ,p}
\int_0^\infty  \int_{\R^2} 
|X f(|x|/2, w, |x|^\a s) |^p |x|^\a  dx dw dr,
\end{aligned}
\end{equation*}
by the Hardy inequality.

\step{Estimate  of $I_4$.} Let $\gamma_4(t)= \mathrm{e}^{tX_1} 
\Big(\frac{|x|}{2} , y'+\t, |x|^\a
 (y'-y+{\t}/ {2^\a})\Big) $ be the curve connecting $u_3$ and $u_4$, with
$0\leq t\leq x'-\frac{|x|}{2}\leq Cd$. The corresponding integral is
\begin{gather*}
 I_4 = \int _A \frac{|x|^\a x'^\a }{d^{p+\a+1}} 
 \Big(\int_0^{x'-\frac{ |x|}{2}}\Big| Xf \Big(\frac{|x|}{2}+t, y'+\t, |x|^\a
 \Big(y'-y+\frac{\t}{2^\a}\Big)  
 \Big)\Big| dt \Big)^p dxdydx'dy' 
 \\
 \leq C_\alpha 
 \int_A  \frac{|x|^\a }{d^{p+ 1}} 
 \Big(\int_0^{Cd} \Big| X f\Big(s, y'+\t, |x|^\a
 \Big(y'-y+\frac{\t}{2^\a}\Big)   
 \Big)\Big| ds\Big)^p  dxdydx'dy',
%  \intertext{let $t\mapsto s=\frac{|x|}{2}+t$}
% \\
 \end{gather*}
where we used $ x' \leq C  d $ and we changed variable $t\mapsto s
=|x|/2+t $ using  the estimate  $0\leq s\leq Cd$.

Next we pass to the variables $y\mapsto\eta =y'-y$ and  $x'\mapsto \xi =x'-x$,
where $\xi\geq |x|-x $ is nonnegative  and observe that
$|\z|:=|(\xi,\eta)|\simeq d$. We use the following rule for changing integration
variables and order 
  \[
\int_{-\infty}^{+\infty}\int_{|x|}^{+\infty} \cdots dx' dx  =
\int_{-\infty}^{+\infty} \int_{|x|-x}^{+\infty} \cdots d\xi dx
=\int_{0}^{+\infty}\int_{- \xi /2}^{+\infty}\cdots dx d\xi .
\]
Recall also that  $|x|\leq Cd\simeq C|\z|$.
Letting $E_{\zeta} = \big \{ (x,y')\in\R^2 : |x|\leq C|\z|,\, x>-\xi/2\big\}$, 
we obtain the following estimate
\[
 \begin{split}
 I_4  & 
 \leq C_\alpha   \int_{\R^ + \times \R^ +}
 \int_{E_\z}
 \Big(\int_0^{C|\z|} \Big| X f\Big(s, y'+\t, |x|^\a
 \Big(\eta + \frac{\t}{2^\a}\Big) 
 \Big)\Big| ds\Big)^p |x|^\a dxdy' \frac{d\z}{|\z|^{p+1}} 
\\
&
\leq C_\alpha 
  \int_{\R^ + \times \R^ +}
   \Big(\int_0^{C|\z|} \Big[ \int_{E_\z }  
 \Big| X  f\Big(s, y'+\t, |x|^\a
 \Big(\eta + \frac{\t}{2^\a}\Big)\Big)\Big|^p
 |x|^\a dxdy'\Big]^{\frac 1p}ds 
  \Big)^p \frac{d\z}{|\z|^{p+1}} 
  \\
&
\leq C_\alpha    \int_{\R^ + \times \R^ +} 
   \Big(\int_0^{C|\z|} \Big[ \int _{E_\z}
 \Big| X f\Big(s, u, |x|^\a
 \Big(\eta + \frac{\t}{2^\a}\Big)\Big)\Big|^p |x|^\a dxdu
 \Big]^{\frac 1p} ds 
  \Big)^p \frac{d\z}{|\z|^{p+1}} .
\end{split}
\]
In the change of variable  $y'\mapsto u=y'+\t$ we used the fact that $\t  $ is
independent of $y'$ after letting $\eta = y'-y$,   by~\eqref{ttaa}.
The next step is the change of variable
 \begin{equation}\label{joao} 
 z=|x|^\a\Big(\eta+\frac{\t}{2^\a}\Big) = \frac{2^\a(x+\xi)^\a |x|^\a}{2^\a(x+\xi)^\a
-|x|^\a}\eta.
\end{equation}
Observe  that $|x|^\a\eta \leq z\leq  \frac{2^\a}{2^\a -1} |x|^\a
\eta$, for all $\xi>0$ and    $x\in\left]-\xi/2,+\infty\right[$.
Note that if   $x$   gets too close  to $  - \frac 23\xi$, then the estimate  fails.
Furthermore, we have 
\begin{equation*}
 \frac{dz}{dx}=\a 2^{\a}\eta\frac{(x+\xi)^{\a-1}|
 x|^{\a-1}}{\big(
 2^\a(x+\xi)^\a -|x|^\a
 \big)^2}\big( -|x|^{\a+1} +2^\a(x+\xi)^{\a+1}
 \operatorname{sign}(x)\big) \simeq \eta |x|
 ^{\a-1}\operatorname{sign}(x).
\end{equation*}

To proceed, we split the integration on $E_\z$ into the integration on the
following two sets
\[
 E_\z^+ =  \big \{ (x,y')\in\R^2 : 0\leq  x \leq C|\z|\big\}
\quad\textrm{and}\quad
E_\z^- =  \big \{ (x,y')\in\R^2 :-\xi/2<x<0\big\}.
\]
We denote the corresponding integrals $I_4^+$ and $I_4^-$, 
respectively. 

We estimate $I_4^+$. 
With the   
 change of variable \eqref{joao}, by  the previous discussion we get
  $ x^\a dx \simeq \frac{z^{1/\a}}{\eta^{(\a+1)/\a}} dz $
and  thus
\begin{equation*}
\begin{aligned}
 I_4^+ &  \leq C_\alpha 
  \int_{\R^{+}\times\R^+ } 
   \Big(\int_0^{C|\z|}\Big[ \int_{  0<z < C \eta |\z|^\alpha}
 | X f (s, u, z  ) ) |^p  \frac{z^{1/\a}}{\eta^{(\alpha+1)/\a}} dzdu
 \Big]^{\frac 1p}  ds 
  \Big)^p\frac{d\z}{|\z|^{p+1}} 
\end{aligned}
\end{equation*}
and we conclude using~\eqref{colla}. 
The estimate of $I_4^-$ is analogous.

\step{Estimate of $I_5$.}
The curve $\gamma_5$ connecting $u_4$ to $u_5=v$, in a backward 
parametrization, gives the following   estimate for the  integral $I_5$:
\begin{equation*}
\begin{aligned}
 I_5 \leq C_\alpha  \int_A \Big( 
 \int_0^\t |X f(x',y'+t,x'^\a t )|dt  \Big)^p \frac{|x|^\a x'^\a
}{d^{\a+p+1}}dxdydx'dy',
\end{aligned}
\end{equation*}
and using  $\t \leq C d$ the evaluation of this  integral is  identical to
the one for~$I_1$.

\begin{remark}
\label{remo}
In this section, we proved the integral estimate \eqref{equa}
starting from a couple of points $u = (x,y,0)$ and $v=(x',y',0)$ 
satisfying \eqref{yy'} and \eqref{xx}. Since \eqref{yy'} can be always assumed,
we briefly discuss the case when \eqref{xx} fails. This can happen in two
situations: either $y'>y$ and $x>x'>0$, or $y'>y$ and $x<-x'<0$.  

In the first case, it suffices to add to
the points $u=(x,y,0)$ and $v=u'=(x',y',0)$ a third point $u''=(x'', y'',
0)=:(2x-x', 2y'-y)$. Both the ordered pairs of points $u, u''$ and
$u', u''$
satisfy \eqref{xx} and \eqref{yy'}. Then it suffices to use the triangle
inequality 
and to recognize that the kernels appearing in the Besov seminorm related to the
three 
pairs  of points $(u,u')$, $(u, u'')$ and $(u', u'')$ are mutally equivalent.

In the second case, we add a third point $u''=(x'',y''):=(-x, 2y'-y)  $. Both
the ordered pairs $(u, u')$ and $(u, u'')$ satisfy \eqref{yy'} and \eqref{xx},
and again, since 
$d(u, u')\simeq d(u, u'')\simeq d(u', u'')$ and 
the kernels appearing in the Besov norm related to the three 
pairs  of points $(u,u')$, $(u, u'')$ and $(u', u'')$ are all equivalent.

\end{remark}
 
\section{Noncharacteristic case}

\label{pop}
 
We are in the case $d\leq  \e_0 |x| $ and $d\leq \e_0|x'|$,
where the constant $\e_0>0$ will be fixed along the proof.  
Since $|x-x'|\leq d$ we can assume   that $x',x
\geq  d / \e_0 $, i.e., they are   both positive. 
Without loss of generality, we  assume that
\begin{equation}\label{vilo} 
 y<y'\qquad \text{ and} \qquad x>x'>0.
\end{equation}
The case $y<y'$ and $0< x < x'$  is discussed  in Remark \ref{remarco}. If   
$x,x'$ are both negative, it suffices to apply the transformation 
$(x,y,t)\mapsto (-x,y,t)$, see~\eqref{cetra}.

With the notation $u = (x,y,0)$, $v=(x',y',0)$ and $d = d(u,v)$, we consider 
the
integration domain  
\begin{equation}\label{perlamiseria} 
 \begin{split}
  B  & = \{(u,v)\in\R^2\times \R^2 : y'\geq y,\, x > x'\geq  d /\e_0 \}.
\end{split}
\end{equation}  
Notice that for  $\e_0$  sufficiently small we may also assume that $x\simeq
x'$.

Starting from $u$, we introduce certain intermediate points interpolating $u $
and $v$. Consecutive points are connected by integral curves of the vector
fields $\pm X_2$ and $\pm Z$ with $Z = X_1+X_2 $.

Let $\sigma = \sigma(u,v) >0$ be the number
$
 \sigma :  =y'-y+ x-x' .
$
We define the points 
\[
\begin{split}
u_0 &= u = (x,y,0)
\\
   u_1 & = \exp(\sigma X_2)(u_0) = (x,y'+ x-x', x^\a \sigma)
\\
 u_2 & = \exp\big( (x'-x) Z\big) (u_1) = \left(x'   , y' , \s x^\a  +
\frac{ {x' }^{\a+1}- x^{\a+1}  }{\a+1} \right).
  \end{split}
\]
Notice that $u_2 \in \R^3_+$, because
\begin{equation}\label{birillino} 
\begin{aligned}
z'  
 : =\sigma  {x}^\alpha -\frac{  {x}^{\alpha+1}   -x'^{\alpha+1} }{
 \alpha+1 }
& =(y'-y)x^\a +\int_{x'}^x (x^\a - t^\a)dt> 0, 
\end{aligned}
\end{equation}  
as soon as  $y'>y$ or $ x>x'>0$.
This  inequality may fail if  $x< x'$, but
see Remark~\ref{remarco}. Observe also that
\begin{equation}
 \label{zetaprimo}
 z'\leq \sigma x^\a \leq  C d \;  x^{\a }\leq  C  \e_0 x'^{\a+1},
\end{equation}
because  $ x' \geq d /\e_0$ and $x\simeq x'$.

To reach $v$ starting from
$u_2$ we follow for a positive time  an approximation of the commutator
$
  [X_2,Z ] = [ X_2, X_1 + X_2]  = - \alpha x^{\alpha -1}
\frac{\partial}{\partial z}.
$
In a standard way, we approximate  the flow along
this commutator with a composition of
flows of the vector fields $\pm Z$ and $\pm X_2$.

Let  $\tau= \tau(u,v)$ be the positive solution  of the   equation 
the
equation $ {z'}+\tau   {x'}
^\alpha-\tau ( {x'}+\tau)^{\alpha} =0$, that reads
\begin{equation} 
\label{tautau}
\begin{aligned}
\tau ( {x'}+\tau)^{\alpha} - \tau   {x'}
^\alpha  
   =   \big(y'-y+x-x'\big){x}^\alpha -\frac{{x}^{\alpha+1}  -x'^{\alpha+1} }{
\alpha+1}.
\end{aligned}
\end{equation}
This equation has a
unique positive solution $\tau\geq 0$.
By \eqref{dasotto} we have 
\begin{equation} 
\label{dillo}
 \tau \simeq  \min \Big\{ \sqrt{\frac{z'}{  {x'} ^{\alpha-1} }},
 z'^{1/(\a+1)}\Big\}
 \simeq
d((x',y',z'),(x',y',0)) \leq C
d(u,v) ,
\end{equation}
 where we used  Theorem~\ref{ddss}  
  and  the triangle
inequality.

Finally, we define the following further points:
 \[
\begin{split}
u_3 & = \exp(\t X_2) (u_2) 
   = \big( {x'}, {y'}+\tau , {z'} + x'^\a \t \big)
\\
u_4 & 
= \exp(\t Z)(u_3) = 
\bigg(
{x'} + \tau, {y'}+2\tau , {z'}
+x'^\a \tau +\frac{  (x'+\t)^{\a+1}- x'^{\a+1}  }{\a+1}
\bigg )
\\
u_5 & =\exp(-\t X_2) (u_4) = \bigg( {x'} + \tau,  {y'}+ \tau
, {z'}  
+x'^\a \tau +\frac{ (x'+\t)^{\a+1}- x'^{\a+1}  }{\a+1}
-(x'+\t)^\a\tau
 \bigg)
  \\
u_6 & = \exp(-\t Z) (u_5) = \big( {x'}  , {y'} 
, {z'} + x'^\a\tau -(x'+\t)^\a\tau \big) =(x',y',0).
\end{split} 
\]
In the last identity we used \eqref{tautau}.
For $i=1,\ldots,6$, we denote by $\gamma_i:[0,T_i]\to\R^3_+$ the curve
connecting $u_{i-1}$ and $u_i$, where $T_i$ is such that $\gamma_i(T_i) = u_i$.

According to Proposition \ref{1:prop}
and Corollary \ref{boxxo2},  for points $(u,v) \in B$ the kernel in \eqref{3:eq}
satisfies
\begin{equation} \label{AA}
 d^{ps} \mu(B(u, d ))\simeq d^{p+2} x^{\alpha-1},
\end{equation} 
and by \eqref{piana}, the distance function has the structure
\[
d \simeq  |x-x'| + |y-y'| + \sqrt{|y-y'|  x } \simeq \max \{ |x-x'|,
\sqrt{|y-y'|  x } \}. 
\] 
The last equivalence follows from $0\leq |y'-y|\leq \sqrt{|y'-y|\,d}\leq
\sqrt{\e_0}  \sqrt{|y'-y|  \, |x| }$.

By the triangle inequality we obtain  
\begin{equation}  
\int_{B} \frac{|f(u) - f(v)|^p}{d^{ps} \mu(B(u,d))} \, d \mu(u) \, d
\mu(v) \le  C_{\alpha, p} \sum_{i=1}^{6} J_{i},
\label{3:eq} 
\end{equation}
where 
\[
J_{i}  :=
\int_{B}  \Big( \int _0^{T_i}  |Xf(\gamma_i(t) |dt \Big) ^p
\frac{x^\alpha  \, x'^\alpha }{d^{p+2} x^ {\alpha-1}  } dxdydx'dy',  \qquad
\text{for } i=1,\ldots,6.
\]
We claim that the integrals $J_i$ satisfy
\[
J_i \leq C_{\alpha,p} \int_{\R^3_+}  
|\nablalfa  f(x,y, z) )|  ^p
dx dy dz. 
\]

\step{Estimate of $J_{1}$.}
Starting from the point $u_0 = (x,y,0)$, we follow  the vector field $X_2$
for a  positive time $ \sigma = y'-y+ x-x' \leq d$. 
Using    the  estimate in \eqref{AA}, we arrive at the inequality
\begin{equation}
 \label{THET}
 J_{ 1} \leq  \int_{B} \frac{1}{d^{p+2}
x^{\alpha-1} } \Big( \int_0^{d} |X f( 
x,y+t, t x^\alpha  )| \, dt \Big)^p
x^\alpha  {x'}^\alpha
 dxdy dx' dy'.
\end{equation}  
We use the coarea formula along with the following lemma.

 \begin{lemma} There exists a constant $C>0$ such that for any $x,y \in\R$ with
$x\geq    r/\e_0>0$
we have 
\begin{equation} \label{gillo}
    %J_{r}(x,y) =  
% \begin{aligned}
%  & 
 \int_{D_r(x,y)}\frac{|x'|^{\alpha } }{|\nabla d(x',y')|} d\mathcal
H^1(x',y') \leq   C r^2 x ^{\alpha-1}  , 
% \text{ Correggo potenza dentro integrale in $|x'|^\a$}   
% \\& \int_{D_r(x,y)}\frac{|x'|^{\alpha } }{|\nabla d(x',y')|} d\mathcal
% H^1(x',y') \leq   C r^2 x ^{\alpha-1}
% \end{aligned}
\end{equation}
where $D_r(x,y) = \{(x',y')\in\R^2: d =r\}$ with $d  =   \max \{
|x-x'|,
\sqrt{|y-y'|  x } \}$.
\end{lemma}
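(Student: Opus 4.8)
The plan is to verify the bound \eqref{gillo} by an explicit parametrization of the level set $D_r(x,y) = \{(x',y') : d(x',y') = r\}$, where $d = \max\{|x-x'|, \sqrt{|y-y'|\,x}\}$, combined with the coarea fact that $|\nabla d| \simeq 1$ on this set. Since $x \geq r/\e_0$ is large compared to $r$, on $D_r(x,y)$ we have $|x-x'| \leq r \ll x$, hence $x' \simeq x$ and $|x'|^\alpha \simeq x^\alpha$. Thus the integral in \eqref{gillo} is comparable to $x^\alpha \mathcal{H}^1(D_r(x,y))$, and the task reduces to showing $\mathcal{H}^1(D_r(x,y)) \leq C r^2/x$, which (times $x^\alpha$) yields the claimed $C r^2 x^{\alpha-1}$.

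First I would describe $D_r(x,y)$ geometrically using the max-structure of $d$. The condition $d = r$ means $\max\{|x-x'|, \sqrt{|y-y'|\,x}\} = r$, i.e.\ $|x-x'| \leq r$ and $|y-y'| \leq r^2/x$, with equality in at least one. This is the boundary of the rectangle $R = [x-r, x+r] \times [y - r^2/x, y + r^2/x]$. Its one-dimensional Hausdorff measure is the perimeter of $R$, namely $2(2r) + 2(2r^2/x) = 4r + 4r^2/x \leq 8r$, since $r^2/x \leq \e_0 r \leq r$. That already gives $\mathcal{H}^1(D_r(x,y)) \leq C r$, which is weaker than what I want — but the extra factor is gained back because on the two long sides (the segments $x' = x \pm r$, which carry most of the length $\simeq r$ when $r^2/x \ll r$) the measure is small: these have total length $2 \cdot 2r^2/x = 4r^2/x$. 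On the two short sides ($y' = y \pm r^2/x$) the length is $2 \cdot 2r = 4r$, but here the integrand is already controlled. I would handle these carefully: split $D_r(x,y)$ into the horizontal parts (where $y' - y = \pm r^2/x$ and $|x'-x| \leq r$, total length $\leq 4r$) and the vertical parts (where $x' - x = \pm r$ and $|y'-y| \leq r^2/x$, total length $\leq 4r^2/x$). On each piece the correct accounting — using $|\nabla d| \simeq 1$ from the coarea formula and $x' \simeq x$ — must reproduce the factor $r^2/x$.

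The cleanest route is to not work with the full level set at once but to use the coarea formula in the form $\int_{\{d=r\}} g/|\nabla d| \, d\mathcal{H}^1 = \frac{d}{dr}\int_{\{d<r\}} g\, d\mathcal{L}^2$, applied here with $g = |x'|^\alpha$. Since $\{d < r\} = \mathrm{int}(R)$ with $R$ the rectangle above, $\int_{\{d<r\}} |x'|^\alpha \, dx' dy' = \frac{2r^2}{x}\int_{x-r}^{x+r} |x'|^\alpha dx' \simeq \frac{2r^2}{x}\cdot 2r\, x^\alpha \simeq r^3 x^{\alpha-1}$, using $x' \simeq x$ throughout. Differentiating in $r$ gives $\frac{d}{dr}(r^3 x^{\alpha-1}) \simeq r^2 x^{\alpha-1}$, which is exactly \eqref{gillo}; to make the differentiation rigorous one checks that the area function $r \mapsto \frac{2r^2}{x}\int_{x-r}^{x+r}|x'|^\alpha dx'$ is $C^1$ in $r$ for $0 < r < \e_0 x$ and its derivative is bounded by $C r^2 x^{\alpha-1}$ by the product rule, with each term of the correct size.

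The main obstacle is bookkeeping rather than any deep difficulty: one must be scrupulous that the estimate $x' \simeq x$ (valid because $|x - x'| \leq r \leq \e_0 x$) is applied uniformly, and that the coarea identity $|\nabla d| \simeq 1$ holds on $D_r(x,y)$ — the latter is slightly delicate at the four corners of the rectangle where $d$ is not differentiable, but those form a null set and can be ignored in the coarea formula, or one replaces the max by a smooth comparable function. The power $x^{\alpha-1}$ (rather than $x^\alpha$) in the final bound comes entirely from the squeeze $r^2/x$ on the $y'$-extent of the region; tracking that single factor correctly through the differentiation in $r$ is the crux.
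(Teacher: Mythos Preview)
Your coarea--differentiation route in the third paragraph is correct and proves the lemma: with $F(r)=\int_{\{d<r\}}|x'|^\alpha\,dx'\,dy'=\frac{2r^2}{x}\int_{x-r}^{x+r}|x'|^\alpha\,dx'$, the product rule gives
\[
F'(r)=\frac{4r}{x}\int_{x-r}^{x+r}|x'|^\alpha\,dx'+\frac{2r^2}{x}\big(|x+r|^\alpha+|x-r|^\alpha\big)\le C\,r^2 x^{\alpha-1},
\]
using $|x'|\simeq x$ on $[x-r,x+r]$. This is a legitimate alternative to the paper's argument.

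However, your framing claim that $|\nabla d|\simeq 1$ on $D_r(x,y)$ is \emph{false}, and this is not a cosmetic slip: it is exactly the mechanism behind the bound. On the horizontal sides $y'=y\pm r^2/x$ one has $d=\sqrt{|y-y'|\,x}$ and hence
\[
|\nabla d|=\frac{\sqrt{x}}{2\sqrt{|y-y'|}}=\frac{x}{2r},
\]
which is large (of order $1/\e_0$), not comparable to $1$. On the vertical sides $x'=x\pm r$ one has $d=|x-x'|$ and $|\nabla d|=1$. This is precisely how the paper proves the lemma: it splits $D_r(x,y)$ into these two pieces and integrates directly. The horizontal sides contribute $\frac{2r}{x}\int_{x-r}^{x+r}|x'|^\alpha\,dx'\simeq r^2 x^{\alpha-1}$ because the large $|\nabla d|$ in the denominator supplies the missing factor $r/x$; the vertical sides contribute $\frac{2r^2}{x}\,|x\pm r|^\alpha\simeq r^2 x^{\alpha-1}$ because they are short. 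Your attempt in the second paragraph to recover the factor $r^2/x$ purely from lengths, under the assumption $|\nabla d|\simeq 1$, could never work on the long sides; the factor comes from $1/|\nabla d|$, not from $\mathcal H^1$. Your coarea identity $F'(r)=\int_{\{d=r\}}|x'|^\alpha/|\nabla d|\,d\mathcal H^1$ quietly incorporates this large gradient, which is why that route succeeds despite the surrounding misstatement. (Minor point: the delicacy in applying coarea here is not at the corners but the failure of global Lipschitzness of $\sqrt{|y-y'|\,x}$ at $y'=y$; since you only need the identity near the fixed level $\{d=r\}$ with $r>0$, where $d$ is locally Lipschitz, this is harmless.)
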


\begin{proof}
The set $D_r(x,y)$ is the boundary of the rectangle 
$[x-r, x+r]\times [y-\frac {r^2}{x},y+\frac {r^2}{x}]$. When  in the max-definition of $d$ we have $d = \sqrt{|y-y'| x}$,
then, on $\{d=r\}$, the gradient   of $d$   satisfies
$
 |\nabla d(x',y') | =  \frac{\sqrt{x}}{2 \sqrt{|y-y'|}} = \frac{x}{2r}. 
$
In the corresponding part of the integral \eqref{gillo}, the function  
$|x'|^{\alpha}$ is integrated on the interval $(x-r,x+r)$.

On the set where  $d=|x-x'|$ we have $|\nabla d|=1$ and, in \eqref{gillo}, the 
constant $|x'|^\alpha = |x\pm r|^ \alpha \simeq |x|^\a=x^\a $ 
is integrated for $y'\in (y- r^2/x,y+r^2/x)$. In both
cases the claim follows. 
\end{proof}

Starting from  \eqref{THET}, by  the coarea formula and
inequality~\eqref{gillo}, by the Minkowski and Hardy inequalities we obtain 
\begin{equation*}
\begin{split}
 J_1 &  
   \leq \int_{x>0} \int_0^{\e_0 x}\int
_{D_r(x,y)}\frac{x'^\a}{|\nabla d |}
   \Big( \int_0^{d} |X f(x, y+t, x^\a t )| \, dt \Big)^p d\mathcal
H^1(x',y')  \frac{dr}{r^{p+2}} xdxdy 
\\& 
   \leq C_\alpha  \int_0^{\infty}\int_{x>0}
   \Big( \int_0^{r} |X f(x,y+t, x^\a t )| \, dt \Big)^p   x^\a dxdy
\frac{dr}{r^{p }}  
 \\    
&   \leq C_\alpha 
\int_0^{\infty} \Big(
   \int_0^{Cr}    \Big[  \int _{x>0}  |X f(x,y+t,x^\a t )|^p  x^\a dxdy 
\Big]^{1/p}dt
\Big)^p \frac{dr}{r^{p }} \\
&
 \leq C_{\alpha,p} 
  \int_0^\infty   \int_{x>0}
|X f(x,y+r,  rx^\alpha )|  ^p
x^\alpha    dx   dy dr 
\\
&
 \leq C_{\alpha,p}   \int_{\R^3_+}  
|X  f(x,y, z) )|  ^p
dx dy dz,
\end{split} 
\end{equation*}
as required.

\step{Estimate of $J_{2}$.} The integral curve connecting $u_1$ and $u_2$ is  
\[
 \gamma_2(t) = \Big ( x- t ,y + \sigma - t , \sigma x^\alpha 
-\frac{x^{\alpha+1} -
(x- t)^{\alpha+1}}{\alpha+1 }  
\Big)  ,\qquad\text{for } 0\leq t \leq  x-x' .
\]
where $\sigma = y'-y+x-x'$. Using \eqref{AA} and $0\leq x-x' \leq d$, we start
from the estimate
\[
 J_{2 } \leq   \int_B  \Big(
   \int_0^{x-x'} | Xf(\gamma_2(t))| 
   dt \Big)^p   \frac{x^\alpha {x'}^ \alpha dxdy dx'dy'}{d^{p+2}
x^{\alpha-1} }  .
\]
We perform the change of variable from  $x',y'$ to $h=(h_1, h_2)$
\begin{equation}\label{cidivu} 
 h = (h_1,h_2) %= G_{x,y}(x',y') 
     = (x-x',\sqrt{(y'-y)x})\in\left]0,+\infty\right[\times
     \left]0,+\infty\right[.
\end{equation}   
Note that   $|h|\simeq d$. The  Jacobian satisfies 
$dx'dy' \leq C\frac{|h|}{x} dh$ and so we obtain  
\[
 J_{2 } \leq C 
 \int_{ \wh B } \frac{1}{|h| ^{p+1} } \Big(
   \int_0^{h_1}  \Big| Xf\Big( x-t  , y+\wh \sigma -t , \wh \sigma x^\alpha
-   \frac{  x^{\alpha+1} -  (x-t)^{\alpha+1} ) }{\alpha+1}
  \Big| dt
\Big)^p    {x} ^ \alpha   dxdy dh,
\]
where $\wh B = \{(x,y,h) \in\R^4: h_1, h_2>0,\quad  x \geq   |h|/\e_0 \}$ and
$ \wh\sigma : =
h_2^2/x+ h_1 \leq C|h|$.  
Now we perform the change of variable in time
\[
\begin{aligned}
  s  & =\phi_{x,h} (t) = \frac{ \wh \sigma x^\alpha
+((x-t)^{\alpha+1}-x^{\alpha+1})/(\alpha+1)}{(x- t) ^\alpha} .
% \\& =\frac{ \wh \sigma x^\alpha
% -|(x+\theta t)^{\alpha+1}-x^{\alpha+1}|/(\alpha+1)}{(x+\theta t) ^\alpha} 
% \\& =\frac{ \wh \sigma x^\alpha
% -\theta ((x+\theta t)^{\alpha+1}-x^{\alpha+1})/(\alpha+1)}{(x+\theta t) ^\alpha}.
\end{aligned}
\]
By \eqref{birillino}, on the integration set we have $s\geq 0$.
Moreover, it is $0\leq t \leq C|h|$ and $x\geq  d/\e_0\simeq  |h|/\e_0$, 
with constants independent of $\e_0$.   Then, choosing $\e_0>0$ small enough, we
have  
% \begin{equation}
% \label{cizzero}
$x-  t \geq  
x/2. $
% \end{equation}
  Since $\wh \sigma =\frac{h_2^2}{x}+|h_1|\leq C  d\simeq |h|$, we conclude that
 $0\leq s\leq C|h|$. An easy computation also shows that, for
$0\leq t \leq C|h|$,
\begin{equation} 
\label{fifa}
 |\phi'_{x,h} (t)+1| \leq  C \frac{|h|}{|x|} \leq \frac 12,
\end{equation}
for  $|x| \geq  |h|/\e_0$ and 
$\e_0$ small enough. 
Letting $\widehat t = \phi_{x,h}^{-1}(s)\in[0,h_1]$  we arrive at the estimate
\[
 J_{2 } \leq C_\alpha \int_{ \wh B }\Big(
   \int_0^{C |h|}  | Xf(x- \wh t, y+\wh \sigma  -\wh t
   , s (x-\wh t)^\a 
)| ds
\Big)^p    x ^ \alpha   dxdy  \frac{dh}{|h| ^{p+1} } .
\]
Next we use  the  Minkowski inequality to interchange integration in $ds$ with
integration in $dxdy$: 
\[
 J_{2 } \leq C_\alpha    \int_{ \R^2  } \Big(
   \int_0^{C |h|}  \Big[  \int_{\{x\geq |h|/\e_0 \} } | Xf(x-\widehat t, y+\wh
\sigma -\widehat
t , s (x- \widehat t) ^\alpha)|^ p  x ^\alpha   dxdy \Big]^{1/p}   ds
\Big)^p       \frac{dh}{|h| ^{p+1} }.
\]
The change of variable $\overline{y} =  y+\wh \sigma -\widehat
t$ in the inner integral is elementary because 
$\wh \sigma$ and $\widehat
t$ do not depend on $y$. Let us consider the transformation $x\mapsto \ol x$ defined by    
\begin{equation}\label{elk} 
 \overline{x} = x-\wh t =x- \phi_{x,h}^{-1}(s) .                                                
 \end{equation} 
%\geq x/2
Note first that $\ol x\in [x-h_1, x]$, because $s\in \phi_{x,h}([0, h_1])$.
Using the  definition of $\phi_{x,h}$ and $\wh\s$,  
we see that  \eqref{elk} can be  written in the form 
\begin{equation}\label{riscriviamo}   
F_s(\ol x):= \frac{\ol x^{\a+1}}{\a+1}- s\ol x^\a =\frac{x^{\a+1}}{\a+1}-h_2^2 x^{\a-1}-h_1 x^\a
=: G_h(x).
\end{equation}
It is easy to see by one-variable calculus  that
$F_s:\left[(\a+1)s,+\infty\right[\to \left [0, +\infty\right[$ is a strictly
increasing bijection with strictly positive derivative. Furthermore,   if $\e_0$
is 
small enough  then 
\[
G_h:\left[|h|/\e_0,+\infty\right[\to G_h(\left[|h|/\e_0,+\infty\right[)=:I_h\subseteq \left[0,+\infty\right[ 
\]
  satisfies  $G_h'(x)>0$ for all $x>|h|/\e_0$.  Then \eqref{elk} can be
   written as a true change of variable $x= G_h^{-1}(F_s(\ol x))$,  where  
$\ol x\in F_s^{-1}(I_h)\subset 
  \left[0,+\infty\right[$  and  by \eqref{riscriviamo} we  have the following change in the
integration element 
\begin{equation}
\label{girino}      (  \overline{x}^\alpha-
\alpha s\overline{x}^{\alpha-1} )
d\overline{x} = (x^\alpha- (\alpha -1) h_2^2 x^{\a-2}-\a h_1 x^{\alpha-1} ) dx \simeq x^\alpha
dx.
\end{equation}  
 Then, $x^\a dx\leq C_\alpha \ol x^\a d\ol x$ and 
 the estimate can be finished by the coarea formula and the Hardy inequality as follows:
 \[
\begin{split}
J_{2 } & \leq C_\alpha \int_{ \R^2  } \frac{1}{|h| ^{p+1} } \Big(
   \int_0^{C |h|}  \Big(  \int_{\ol x>0} | Xf(\overline x,\overline{y},   s
\overline{x}^\alpha )|^ p  {\overline x} ^ \alpha    d\overline{x}d\overline{y}
\Big)^{1/p} ds
\Big)^p      dh
\\
&
\leq C_\alpha     \int_{0 }^\infty     \Big(
   \int_0^{C r}  \Big(  \int_{\ol x>0} | Xf(\overline x,\overline{y},   s
\overline{x}^\alpha )|^ p  {\overline x} ^ \alpha    d\overline{x}d\overline{y}
\Big)^{1/p} ds
\Big)^p      \frac{dr}{r^p}
\\
&
\leq C_{\alpha,p}  \int_{0 }^\infty     \int_{\ol x >0} | Xf(\overline
x,\overline{y},  r
\overline{x}^\alpha)|^ p  {\overline x} ^\alpha   d\overline{x}d\overline{y}
dr 
  \\
  &
\leq C_{\alpha,p} 
     \int_{\R^3_+} | Xf(x,y,z)|^ p  
dxdydz.
\end{split}
\]

\step{Estimate of $J_{3}$.} The curve connecting $u_2$ and $u_3$ is
$
 \gamma_3( t) =  (x',y'+t ,z' +t {x'}^\alpha)$, where 
$ t\in[0,\tau]$ and 
$\tau  $ solution of  \eqref{tautau}. The quantity $z'$ is defined in
\eqref{birillino}.  
Using \eqref{AA} and \eqref{dillo}, we can start from the estimate 
\begin{equation}
 \label{THET3}
 J_{ 3} \leq \int_{B} \frac{1}{d^{p+2}
x^{\alpha-1}} \Big( \int_0^{C d} |X f(x',y'+t ,z' +t {x'}^\alpha)| \, dt
\Big)^p
x^\alpha  {x'}^\alpha
\, dx dy  dx'  dy'.
\end{equation}
Observe that
$
{z'}\leq   \s x^\a \leq C { x'^\alpha}d .$ 
So, the change of variable in time $z' +t {x'}^\alpha = s {x'}^\alpha$ gives $dt
=ds$
and the integration set in $s$ is contained in $  [0,  Cd]$. Then we get
\begin{equation*}
%  \label{THET4}
 J_{ 3} \leq C_{\alpha } \int_{B} \frac{1}{d^{p+2}
x^{\alpha-1} } \Big( \int_0^{C d} |X f(x',y'+\widehat t, s {x'}^\alpha ) )| \,
ds \Big)^p
x^\alpha  {x'}^{\alpha} 
\, dx dy  dx'  dy',
\end{equation*} 
where $\wh t   = s -   z'/{x'}^\alpha $.

Next we change variables from   $(x,y)$ to $  h = (h_1,h_2)$ as in \eqref{cidivu}
with Jacobian  $dh = \frac{x}{2 h_2} dxdy$,
and so we obtain $xdxdy=|2h_2| dh\leq C |h| dh$. Therefore
\begin{equation}
 \label{THET44}
 J_{ 3} \leq C_{\alpha } \int_{\R^2  } \frac{1}{|h| ^{p+1}
} \Big( \int_0^{C |h| } |X f(x',y'+\widehat t, s {x'}^\alpha ) )| \, ds
\Big)^p
 { x'}^\alpha
\,   dx'  dy'\, dh,
\end{equation} 
where $\wh t   = s -  z'/{x'}^\alpha  =s-\frac{1}{x'^\a}\Big((\frac{h_2^2}{x'+h_1}+h_1)
(x'+h_1)^\a - 
\frac{( x'+h_1)^{\a+1}-x'^{\a+1})}{\a+1} \Big) $   does not depend on $y'$.

We use the Minkowski inequality to interchange integration in $ds$ and $dx'dy'$:
\begin{equation*}
%  \label{THET444}
 J_{ 3} \leq C_{\alpha }  \int_{\R^2} \frac{1}{|h| ^{p+1}
} \left( \int_0^{C |h| }  \Big( \int_{x'>0 } |X f(x',y'+\widehat t, s
{x'}^\alpha ) )| ^p   {x'}^\alpha dx' dy' \Big)^{1/p}   ds
\right)^p
 dh.
\end{equation*} 

The change of variable $\bar y = y'+\widehat t$
satisfies   $ d\bar y = d y'$, and we finally obtain
\begin{equation*}
%  \label{THET4444}
 J_{ 3} \leq C_{\alpha }  \int_{\R^2} \frac{1}{|h| ^{p+1}
} \left( \int_0^{C |h| }  \Big( \int_{x'>0} |X f(x',\bar y , s
{x'}^\alpha ) )| ^p   {x'}^\alpha dx' d\bar y \Big)^{1/p}   ds
\right)^p
 dh.
\end{equation*} 
Ultimately, we conclude using the coarea formula and the Hardy
inequality.

\step{Estimate of $J_{4}$.}
The curve connecting $u_3$ and $u_4$ is  
\[
\begin{aligned}
\gamma_4(t) & =  \big(x'+t , y'+\tau+ t , z' + \tau {x'}^\alpha
+[(x'+t)^{\alpha+1}-{x'}^{\alpha+1}]/(\alpha+1)\big)
,\quad t\in[0, \t].
\end{aligned}
\]
Using \eqref{AA} and \eqref{dillo}, we can start from the estimate 
\begin{equation}
 \label{THET9}
J_{ 4} \leq  \int_{B} \Big( \int_0^{C d} |Xf(\gamma_4 (t) )| \, dt \Big)^p
\frac{x^\alpha x'^{\alpha}
\, dx dy  dx'  dy'}{d^{p+2}
x^{\alpha-1} }
.
\end{equation} 
With the change of variables \eqref{cidivu} from  variables $(x,y)$ to $h=(h_1,h_2)$,  we obtain  
\[
J_{  4} \leq C_\alpha   \int _{ \{ x '\geq   |h|/\e_0 \} }
\Big(
   \int _0^{C |h|}   | Xf(\cdots)  | dt
\Big)^p   \frac{ {x'} ^
\alpha  dx'dy' dh}{|h| ^{p+1} } ,
\]
where 
\[
(\cdots)=
 \Big(x'+t, y'+\wh \tau+ t  , \wh {z'}+ \wh
\tau {x'}^\alpha
+\frac{ (x'+t)^{\alpha+1}-{x'}^{\alpha+1}}{ \alpha+1}
\Big),
\]
and
\begin{equation}\label{zetacap} 
 \wh z'=\wh z'(x',h)=  h_2^2(h_1+x')^{\a-1} +h_1 
 (h_1+x')^\a -\frac{(x'+h_1)^{\a+1}-x'^{\a+1}}{\a+1}.
\end{equation}
Notice that  the unique solution $\wh \tau=\wh \tau(x',h)$ of
$\t((x'+\t)^\a-x'^\a)= \wh z'$ does not depend on $y'$. 

In the next step, we perform the change of variable in time  
\begin{equation}
\label{caio} 
\begin{aligned}
 s &=\wh\phi_{x',h}(t):  = \frac{\wh z'(x',h)+ \wh \tau (x',h){x'}^\alpha 
+[(x'+t)^{\alpha+1} -{x'}^{\alpha+1} ]/(\alpha+1) }{(x'+t)^\alpha}  
\\&
 =  \frac{x'+t}{\a+1} +(x'+t)^{-\alpha}
 \Big\{\wh z'(x',h)+ \wh \tau (x',h){x'}^\alpha 
 -\frac{{x'}^{\alpha+1} }{\alpha+1}   \Big\}
\end{aligned}
\end{equation} By \eqref{zetaprimo}, \eqref{tautau} and the noncharacteristic   case, we have 
$
0<\wh z'(x',h)+\wh \tau(x',h)x'^\a
\leq C d   x'^{\a}\leq C \e_0   x'^{\a+1}$. Furthermore, an easy
computation furnishes
$\frac {1}{2(\a+1)}\leq  \phi'(t) \leq   \frac{2}{\a+1}$ for all $0\leq t\leq
C|h|$, if $\e_0$ is small enough. 
Therefore, $\phi_{x',h}:[0, C|h|]\to \phi_{x',h}([0, C|h|])=:A_{x' ,h}\subset
[0, \wh C|h|] $ is a monotone increasing bijection.
% $0\leq s\leq C|h|$ and , and
% $x'\geq  |h|/\e_0$, provided that $\e_0$ is small enough.
Letting $\wh t = \wh \phi_{x',h}^{-1}(s)$, we obtain 
\[
J_{ 4} \leq C_\alpha  \int_{ \{ x '\geq   |h|/\e_0 \} } \frac{1}{|h| ^{p+1} }
\Big(
   \int_0^{\wh C|h|}  | Xf(x'+\wh t, y'+\wh \tau+ \wh t  , s(x'+\wh t)^\alpha )
| ds
\Big)^p    {x'} ^ \alpha    dx'dy' dh.
\]
We use the Minkowski inequality to interchange integration in $ds$ and 
$dx'dy'$:
\[
 J_{ 4} \leq C_\alpha \int_{\R^2  }  \Big(
   \int_0^{\wh C |h|}  \Big[ \int _{ \{ x '\geq   |h|/\e_0 \} } | Xf(
   x'+\wh t, y'+\wh \tau+ \wh t  , s(x'+\wh t)^\alpha 
) |^p  
{x'} ^ \alpha   dx'dy'\Big] ^{1/p}   ds
\Big)^p    \frac{dh}{|h| ^{p+1} }.
\]
The functions $\wh \tau$ and $\wh t $ do not depend on $y'$.
So the change of variable $\bar y =  y'+\wh
\tau+ \wh t $ is a   translation and  $d\bar y = d y'$.

Next we look at the  transformation 
$\bar x=x' +\wh t =x'+\wh\phi_{x',h}^{-1}(s) $, 
where we know that $\wh t\in [0, \wh \tau (x', h) ]\subset [0, C |h|]$. 
Such transformation is equivalent  to $\wh\phi_{x',h} (\ol x- x')= s$. Since 
the explicit form of \eqref{caio} gives
\begin{equation*}
\begin{aligned}
  \wh\phi_{x',h}(t)   =\frac{1}{(x'+t)^\a}
 \bigg\{
& h_2^2 (x'+h_1)^{\a-1}+ h_1 (x'+h_1)^\a 
 \\ 
 &+\frac{(x'+t)^{\a+1}- (x'+h_1)^{\a+1}}{\a+1} +\wh\t(x',h)x'^\a
 \bigg\},
\end{aligned}
\end{equation*}
 the transformation can be  written as 
\begin{equation}\label{cipolla} 
\begin{aligned}
 F_s(\ol x):  = \frac{\ol x^{\a+1}}{\a+1}-s\ol x^\a
 & = \frac{(x'+h_1)^{\a+1}}{\a+1}-h_2^2 (x'+h_1)^{\a-1}
\\& \qquad -  h_1(x'+h_1)^\a -\wh \t (x',h) x'^\a
  =: \wh G_h(x').
\end{aligned}
\end{equation} 
We are in a situation similar to \eqref{riscriviamo} in the estimate of $J_2$,
but here the right-hand side is slightly more complicated. As in the previous case,  
we see by one-variable calculus that $F_s:\left[(\a+1)s,+\infty\right[\to \left [0, +\infty\right[$ is a strictly increasing bijection with strictly positive derivative. Concerning the right-hand side, it suffices to show that  
\[
\wh G_h:\left[|h|/\e_0,+\infty\right[\to \wh G_h(\left[|h|/\e_0,+\infty\right[)=:\wh I_h\subseteq \left[0,+\infty\right[ 
\]
  satisfies  $\frac{d}{dx'}\wh G_h (x')>0$ for all $x'>|h|/\e_0$. 
All terms are similar to those appearing in $J_2$,  but here we need to show the
following  further inequality:
\begin{lemma}\label{diciamo} 
We have the estimate 
\begin{equation*}
 \label{latau}
 \Big|\frac{\p}{\p x'}x'^{\a }\wh \tau(x',h)  \Big| \leq \s_0 x'^\a, \quad\text{for all $x'>|h|/\e_0$,}
\end{equation*} 
where the constant $\s_0$ can be made small by choosing  $\e_0$ small enough. 
\end{lemma}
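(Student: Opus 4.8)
\emph{Plan.} Recall that $\wh\t=\wh\t(x',h)$ is defined by the equation $\t\big((x'+\t)^\a-{x'}^\a\big)=\wh z'(x',h)$ with $\wh z'$ as in \eqref{zetacap}, so I would argue by implicit differentiation of this relation in $x'$. Throughout I would use, on the region $x'>|h|/\e_0$, the elementary consequences $0\le h_1,h_2\le|h|\le\e_0 x'$, so that $(h_1+x')^{\a-1}\simeq{x'}^{\a-1}$ and similarly for the other fixed powers occurring. The entire difficulty is concentrated in one estimate: that $\p_{x'}\wh z'$ is \emph{one order smaller} than the naive size ${x'}^{\a-1}|h|$ of the individual terms of \eqref{zetacap}, namely
\[
|\p_{x'}\wh z'(x',h)|\le C_\a\,|h|^2\,{x'}^{\a-2},
\]
the gain being a factor $|h|/x'\le\e_0$. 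Everything else is bookkeeping with mean–value and convexity inequalities.

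\emph{Step 1: size and smallness of $\wh z'$ and $\wh\t$.} Writing $\wh z'(x',h)=h_2^2(h_1+x')^{\a-1}+\int_0^{h_1}\big[(h_1+x')^\a-(s+x')^\a\big]\,ds$ and bounding the integrand above and below by constant multiples of ${x'}^{\a-1}(h_1-s)$ (here $\a\ge1$ is used), one gets $\wh z'\simeq(h_1^2+h_2^2){x'}^{\a-1}\simeq|h|^2{x'}^{\a-1}$; in particular $\wh z'>0$ whenever $|h|>0$, the case $|h|=0$ being trivial since then $\wh\t=0$. Since $\t\mapsto\t\big((x'+\t)^\a-{x'}^\a\big)$ is an increasing bijection of $[0,\infty)$ whose value at $\t=x'/2$ is $c_\a{x'}^{\a+1}$ with $c_\a>0$, while $\wh z'\le C_\a\e_0^2{x'}^{\a+1}$, for $\e_0$ small we obtain $\wh\t\le x'/2$; for such $\t$ one has $(x'+\t)^\a-{x'}^\a\simeq{x'}^{\a-1}\t$, hence $\wh z'\simeq{x'}^{\a-1}\wh\t^2$, and combining with the previous line gives $\wh\t\simeq|h|\le\e_0 x'$.

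\emph{Step 2: the key cancellation (the main obstacle).} Differentiating \eqref{zetacap},
\[
\p_{x'}\wh z'=(\a-1)h_2^2(h_1+x')^{\a-2}+\Big(\a h_1(h_1+x')^{\a-1}-\big[(x'+h_1)^\a-{x'}^\a\big]\Big).
\]
The first summand is at once $\le C_\a|h|^2{x'}^{\a-2}$. The parenthesis is the point: its two terms are each of size ${x'}^{\a-1}h_1$, but they cancel to second order, since the parenthesis equals $\int_0^{h_1}\a\big[(h_1+x')^{\a-1}-(s+x')^{\a-1}\big]\,ds$, whose integrand is $\le C_\a{x'}^{\a-2}(h_1-s)$ for $\a\ge1$ (and vanishes identically for $\a=1$); hence the parenthesis is $\le C_\a{x'}^{\a-2}h_1^2\le C_\a|h|^2{x'}^{\a-2}$, and the displayed bound on $|\p_{x'}\wh z'|$ follows. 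A term–by–term estimate would only give $|\p_{x'}\wh z'|\lesssim{x'}^{\a-1}|h|$, which is too weak to produce a \emph{small} constant; recognizing that the cancellation must be used is the crux of the lemma.

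\emph{Step 3: conclusion.} Differentiating the identity $\t\big((x'+\t)^\a-{x'}^\a\big)-\wh z'(x',h)=0$ implicitly at $\t=\wh\t$, one has for the $\t$–derivative of the left side $(x'+\t)^\a-{x'}^\a+\a\t(x'+\t)^{\a-1}\simeq{x'}^{\a-1}\wh\t$ (lower bound by convexity of $t\mapsto t^\a$, upper bound from $\wh\t\le x'/2$), while the $x'$–derivative has absolute value $\le\a\wh\t\big|(x'+\wh\t)^{\a-1}-{x'}^{\a-1}\big|+|\p_{x'}\wh z'|\le C_\a{x'}^{\a-2}\wh\t^2+C_\a|h|^2{x'}^{\a-2}$, using Step 2. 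Dividing and invoking $\wh\t\simeq|h|$ from Step 1 yields $|\p_{x'}\wh\t|\le C_\a\big(\wh\t/x'+|h|^2/(x'\wh\t)\big)\le C_\a|h|/x'\le C_\a\e_0$. Therefore
\[
\Big|\frac{\p}{\p x'}\big({x'}^\a\,\wh\t(x',h)\big)\Big|\le\a{x'}^{\a-1}\wh\t+{x'}^\a\,|\p_{x'}\wh\t|\le C_\a\e_0\,{x'}^\a,
\]
which is the assertion with $\s_0=C_\a\e_0$, as small as desired once $\e_0$ is small. I expect Step 2 to be the only genuinely delicate point; Steps 1 and 3 are routine once the smallness $\wh\t\simeq|h|\le\e_0 x'$ is in hand.
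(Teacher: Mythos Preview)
Your proof is correct and rests on the same key observation as the paper's: the second--order cancellation in $\p_{x'}\wh z'$ between $\a h_1(h_1+x')^{\a-1}$ and $(x'+h_1)^\a-{x'}^\a$, which you isolate in Step~2. The paper arrives at this same cancellation after first writing $\wh\t=x'\,v^{-1}(\wh z'/{x'}^{\a+1})$ with $v(s)=s((1+s)^\a-1)$ and reducing the claim to the inequality $\frac{\wh\t}{\wh z'}\,|\p_{x'}\wh z'|\le\s_0$, which it then verifies by Taylor expansion in $h_1/x'$; your route via direct implicit differentiation together with the two--sided size estimates $\wh z'\simeq|h|^2{x'}^{\a-1}$ and $\wh\t\simeq|h|$ is a bit more streamlined but mathematically equivalent.
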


The proof of the claim is postponed   after the end of the estimate of~$J_4$.
To conclude the estimate of $J_4$, as a consequence of Lemma \ref{diciamo}, we
discover that  we may write
$x'= \wh G_h^{-1} F_s(\ol x)$ and the change of  variable has strictly positive
derivative, the variable   $\ol x $ is nonnegative  and  differentiating
\eqref{cipolla}, we get $x'^\a dx'\leq C \ol x^\a d\ol x$.  

 Ultimately, we obtain the estimate 
\[
 J_{ 4} \leq C_{\alpha }  \int_{\R^2  } \frac{1}{|h| ^{p+1} } \Big(
   \int_0^{C |h|}  \Big( \int _{\ol x >0} | Xf(\bar x, \bar y  ,
s{\bar x} ^\alpha )|^p   {\bar x} ^ \alpha     d\bar x d \bar y \Big) ^{1/p}  
ds
\Big)^p    dh,
\]
and the argument is concluded in the usual way.

\begin{proof}[Proof of Lemma \ref{diciamo}]
To prove claim  \ref{diciamo}, we  first   get an explicit form of $\wh \tau$. Starting from  
\begin{equation}
\label{cuccurullo} 
 \t((x'+\t)^{\a }- x'^{\a })=\wh z':= h_2^2(x'+ h_1)^{\a-1}+ h_1 (x'+ h_1)^\a -\frac{(x'+ h_1)^{\a+1}-x'^{\a+1}}{\a+1}
\end{equation}
and letting $v(s)= s((1+s)^\a -1)$, 
we see that $\frac{\t}{x'} =v^{-1}(\frac{z'}{x'^{\a+1}})$. Recall
that the ratio $\frac{z'}{x'^{\a+1}}$ is close to zero 
if $\e_0$ is small (see~\eqref{zetaprimo}). Furthermore, we have $v(s)\simeq
s^2$ and $v'(s)\simeq s$ for $s$ close to $0$. 
So \eqref{latau} is equivalent to 
\begin{equation*}
\begin{aligned}
&\Big |\frac{\p}{\p x'}
\Big (x'^{\a+1} v^{-1}\Big(\frac{z'}{x'^{\a+1}}\Big)\Big)\Big|\leq \s_0 x'^\a                                                                                                                                                                          
 \quad
\Leftrightarrow 
%\Big| (\a+1)x'^{\a} v^{-1}\Big(\frac{z'}{x'^{\a+1}}\Big) 
%+x'^{\a+1}D v^{-1}\Big( \frac{z'}{x'^{\a+1}}\Big)\frac{\p}{\p
%x'}\frac{z'}{x'^{\a+1}}\Big|<\s_0 x'^\a
%\quad \Leftrightarrow\quad 
%\\&  
\quad 
\Big| (\a+1)\frac{\t}{x'}+ \frac{x'^{\a+1}}{v'(\t/x')} \frac{\p}{\p
x'}\frac{z'}{x'^{\a+1}}\Big|<\s_0  .
\end{aligned}
\end{equation*}
 The first term is easily controlled. In order to control the second one,
observe
that   
 \begin{equation}
 \label{girini} 
  \frac{x'}{v'( \t/x')}\simeq \frac{x'}{\t/x'}=\t \frac{x'^2}{\t^2}\simeq \frac{\t x'^{\a+1}}{z'},
 \end{equation}
by the quadratic behaviour of $v$ calculated on the small argument $\frac{\t}{x'}$.
Then we are left to prove that 
\begin{equation}
\label{giretti} 
\begin{aligned}
 \Big|\Big(\frac{\t}{z'}x'^{\a+1} \Big) \Big( \frac{\p_{x'}z'}{x'^{\a+1}}-\frac{\a+1}{x'^{\a+2}}z'\Big) \Big|\leq \s_0.
\end{aligned}
\end{equation}
The second term is easily estimated.  To conclude, we show that 
$\frac{\tau}{z'}|\frac{\p  z'}{\p x'}|\leq \s_0$. By a direct calculation  of
$\p_{x'} z'$, we are reduced to the proof that the inequality  
\begin{equation*}
\begin{aligned}
&\bigg| \tau  \Big((\a-1)h_2^2  (x'+h_1)^{\a-2}+\a h_1 (x'+h_1)^{\a-1}-((x'+h_1)^\a - x'^\a) \Big)\bigg|
\\&
\leq \s_0 \Big(h_2^2  (x'+h_1)^{\a-1} +h_1 (x'+h_1)^\a
 -\frac{ (x'+h_1)^{\a+1}-x'^{\a+1}}{\a+1}\Big)
\end{aligned}
\end{equation*}
holds for some $\s_0$  as small as we wish for small $\e_0$. The ratio
$\frac{\tau   (\a-1)h_2^2  (x'+h_1)^{\a-2}}{ h_2^2  (x'+h_1)^{\a-1}}$
 enjoys this property,  by the  estimate $\t\leq Cd\leq C\e_0
x'$. Thus, it suffices to prove the inequality with $h_2=0$. This can be
achieved by looking at the  following Taylor expansions in $h_1/x'$
\begin{equation*}
\begin{aligned}
&x'\Big( \a h_1 (x'+h_1)^{\a-1}-((x'+h_1)^\a - x'^\a)\Big) =\frac{\a(\a-1)}{2}x'^{\a-1}h_1^2 +
x'^{\a-1}h_1^2o(1)
% x'^{\a +1}
% O\Big(\Big( \frac{h_1}{x'}\Big)^3 \Big)
\\&
h_1 (x'+h_1)^\a
 -\frac{ (x'+h_1)^{\a+1}-x'^{\a+1}}{\a+1}
 =\frac{\a }{2}x'^{\a-1}h_1^2 +
x'^{\a-1}h_1^2o(1),
%  +x'^{\a +1}
% O\Big(\Big( \frac{h_1}{x'}\Big)^3 \Big),
\end{aligned}
\end{equation*}
where $o(1)\to 0$ as   $h_1/x'\to 0 $.
The proof of Lemma \ref{diciamo} is concluded. 
\end{proof}

\step{Estimate of $J_{5}$.} The (backward) curve connecting $u_4$ and $u_5$ is 
\[
 \gamma_5(t) = \Big( x'+\t, y'+\t+ t,
\frac{(x'+\t)^{\a+1}-x'^{\a+1}}{\a+1} +(x'+\t)^\a t \Big), \quad t\in [0,\tau],
\]
and we have 
\begin{equation*}
 J_5 =\int_B  \Big( 
 \int_0^\t  | X  f(\gamma_5(t))| dt 
 \Big)^p \frac{x^\a x'^\a dxdydx'dy'}{d^{p+2}x^{\a-1}}.
\end{equation*}
We change variable $t\mapsto s $ letting $(x'+\t)^\a s = (x'+\t)^\a t +
\frac{(x'+\t)^{\a+1}-x'^{\a+1}}{\a+1}
$.  Using $\t\leq Cd$, we get $0\leq s\leq C d$ and we have
\begin{equation*}
 J_5 \leq  \int_{B}
 \Big( 
 \int_0^{Cd}  \Big| X   f\Big( x'+\t, y'+\t+ \wh t,  (x'+\t)^{\a }s \Big)\Big|ds
 \Big)^p \frac{x^{\a+1} dxdydx' dy'}{d^{p+2}},
\end{equation*}
where $\wh  t =\wh t (s,x,x',y'-y ) =
s-\frac{1}{\a+1}\frac{(x'+\t)^{\a+1}-x'^{\a+1}}{ (x'+\t)^\a} $.

Next we pass from variables $x,y$  to variables $h_1 = x-x'$ and $h_2
=\sqrt{(y'-y)x}$. As in the previous cases, the Jacobian satisfies the estimate 
$ x  dx dy  \leq C |h| dh
$.
The unique solution $\wh \t=\wh \t(x',h)$ of~\eqref{cuccurullo}
does not depend on $y'$. Then, the function $\wh t=
s-\frac{1}{\a+1}\frac{(x'+\wh \t)^{\a+1}-x'^{\a+1}}{ (x'+\wh \t)^\a}
$  defined above, depends on $x', h_1, h_2$ but not on $y'$.  
Thus, after   the Minkowski inequality and   the change of variable $\ol y = 
y'+\wh\tau+\wh t$, we obtain
\begin{equation*}
\begin{aligned}
 J_5 & \leq C_\alpha 
\int _{\R^2} \Big( 
 \int_0^{C|h|}\Big [
\int_{x'>|h|/\e_0}
|X f(x'+\wh \t, \ol y , (x'+\wh \t)^\a s)|^p
x'^\a dx' d\ol y  
 \Big]^{1/p} ds 
 \Big)^p
 \frac{dh}{|h|^{p+1}}.
\end{aligned}
\end{equation*}
% where we let $\ol y = y'+\wh \t +\wh t$, with $dy' = d\ol y$.

Finally, we exploit the transformation
$\ol x = x' +\wh\t(x',h)$. With a slight  modification of the argument used in
the estimate of $J_4$, see especially  \eqref{cuccurullo}, \eqref{girini} and
\eqref{giretti}, we see that $|\p_{x'}\wh \t(x', h)|<\frac 12$, if $\e_0$ is
small and $x'>|h|/\e_0$. Therefore we have a correct change of variable and moreover $x'^\a dx'
\simeq \ol x^\a d\ol x$. The argument is then concluded as in the  estimate
of~$J_4$.

\step{Estimate of  $J_6$.} We have to estimate the integral 
\begin{equation*}
\begin{aligned}
 J_6 = \int_B 
 \Big ( \int_0^{\t}\Big|Xf\Big (x'+t, y'+t,\frac{(x'
 +t)^{\a+1}- x'^{\a+1}}{\a+1}    \Big) \Big| dt \Big)^p\frac{x^\a x'^\a dxdy
dx'dy'}{d^{p+2}x^{\a-1}} .
 \end{aligned}
\end{equation*}
We use $\t\leq Cd$ and we change variables from $(x,y)$ to $h=(h_1,h_2)$ letting
$h_1=x-x'$ and $h_2=\sqrt{(y'-y)x'\;}$. 
Then $x dxdy \leq C|h| dh$, with $|h|\simeq d$ and we get  
\begin{equation*}
\begin{aligned}
 J_6 &\leq C_{\alpha}  
  \int_{\R^2} \int_{E_h}   
  \Big( \int_0^{C|h|}
  \Big|Xf\Big (x'+t, y'+t,\frac{ (x'
 +t)^{\a+1}- x'^{\a+1}}{\a+1}  \Big) \Big| dt
  \Big)^px'^\a dx'dy'  \frac{dh}{|h|^{p+1}},
\end{aligned}
\end{equation*}
where $E_h:=\{(x',y')\in\R^2: x'>|h|/\e_0\}$. Next we perform the change of variable  
\begin{equation*}
 t\mapsto s=\frac{1}{(x'+t)^\a}\frac{(x'+t)^{\a+1}-x'^{\a+1}}{\a+1}=:\phi_{x'}(t).
\end{equation*}
An explicit calculation gives 
$
\frac{d}{dt}\phi_{x'}(t)
\in \big[\frac{1}{ \a+1 }, 1 \big]$, for all  $t>0 $.
  Therefore $ \phi_{x'}(t)\simeq  t $, on $t\in[0, C|h|]$  and $ds\simeq dt$.
Denoting $\wh t=\phi_{x'}^{-1}(s)$, we get
\begin{equation*}
\begin{split}
 J_6 &\leq C_\alpha \int_{\R^2} 
   \int_{E_h}  
  \Big( \int_0^{C|h|} 
   |Xf  (x'+\wh t, y'+\wh t, (x'+\wh t )^\a s)  | ds
  \Big)^p  x'^\a dx'dy' \frac{dh}{|h|^{p+1}}.
\end{split}
\end{equation*}
An application of the Minkowski inequality  and  the change of variable 
$y'\mapsto \ol y =y'+\ol t $, where 
$\wh t$ does not depend on $y$, lead us to  
\begin{equation*}
J_6\leq C_{\alpha}  \int_{\R^2}  
    \frac{dh}{|h|^{p+1}}
   \bigg(\int_0^{C|h|} 
\bigg[
 \int_{  E_h}  
   |Xf  (x'+\wh t, \ol y, (x'+\wh t )^\a s)  |^p x'^\a dx' d\ol y
  \bigg]^{1/p} ds\bigg)^p.
\end{equation*}

Finally, we analyze the change of  variable $\ol x = x' + \phi_{x'}^{-1}(s)$. 
This is equivalent to 
$\phi_{x'}(\ol x- x')=s$ and using the  definition of $\phi$ we get
$
x'= (\ol x^{\a+1}- (\a+1)s \ol x^\a  )^{1/(\a+1)}  
$.
 Note 
 that $\ol x\geq x'$. Since $s\leq C d\leq C\e_0 x'\leq C\e_0 \ol x $, if $\e_0$ is small
 enough, we get 
 $x'^\a dx'\simeq x^\a dx$ and ultimately
 \begin{equation*}
  J_6 \leq C_{\alpha}  \int _{\R^2} \bigg(\int_0^{C|h|}
\bigg[\int _{\R^2} 
   |Xf  (\ol x , \ol y, |\ol x| ^\a s)  |^p |\ol  x| ^\a d\ol x  d\ol y
  \bigg]^{1/p}  ds\bigg)^p \frac{dh}{|h|^{p+1 }}.
 \end{equation*}
The estimate can be concluded as in the previous cases.

\begin{remark}\label{remarco} 
So far, we assumed that  $ y<y'$ and $x>x'>0$. If $y<y'$ and
$0<x < x'$ we add to the points $u=(x,y,0)$ and $v=u'=(x',y', 0)$ a third point
$u''=(2x-x',
 2y'-y, 0)$.
Then we have $d(u,u')\simeq d(u, u'')\simeq d(u',u'')$
and 
$
 \mu(B(u, d(u, u')))\simeq \mu(B(u, d(u, u'')))\simeq \mu(B(u', d(u', u''))).
$
Thus, the estimates in this case can be obtained as explained in Remark~\ref{remo}.

\end{remark}

\def\cprime{$'$} \def\cprime{$'$}
\providecommand{\bysame}{\leavevmode\hbox to3em{\hrulefill}\thinspace}
\providecommand{\MR}{\relax\ifhmode\unskip\space\fi MR }
% \MRhref is called by the amsart/book/proc definition of \MR.
\providecommand{\MRhref}[2]{%
  \href{http://www.ams.org/mathscinet-getitem?mr=#1}{#2}
}
\providecommand{\href}[2]{#2}

% \bibliography{/home/daniele/Dropbox/Matematica/Bibtex/database_17_01}
% \bibliographystyle{amsalpha}

\end{document}